\author{}
\date{}
\tikzstyle{vertex}=[circle,fill=black!100,text=white,inner sep=0.8mm]
\tikzstyle{point}=[circle,fill=black,inner sep=0.1mm]
\DeclareMathOperator{\cw}{\textup{cw}}
\DeclareMathOperator{\vs}{\textup{vs}}
\DeclareMathOperator{\lev}{lvl}
\newcommand{\cT}{\mathcal{T}}
\newcommand{\cP}{\mathcal{P}}
\newtheorem{lemma}{Lemma}
\newtheorem{theorem}[lemma]{Theorem}
\theoremstyle{definition}
\newtheorem{remark}[lemma]{Remark}
\newtheorem{question}{Question}
\title{The treewidth and pathwidth of graph unions}
\author{Bogdan Alecu\thanks{School of Computing, University of Leeds, UK. Email: \texttt{b.alecu@leeds.ac.uk}} \and 
	Vadim Lozin\thanks{Mathematics Institute, University of Warwick, Coventry, CV4 7AL, UK. Email: \texttt{V.Lozin@warwick.ac.uk}} \and
	Daniel A. Quiroz\thanks{Instituto de Ingenier\'ia Matem\'atica-CIMFAV, Universidad de Valparaiso, Chile. Email: \texttt{daniel.quiroz@uv.cl}} \and
	Roman Rabinovich\thanks{ArangoDB Inc. Email: \texttt{roman.rabinovich@arangodb.com}} \and 
	Igor Razgon\thanks{Department of Computer Science and Information Systems, Birkbeck University of London. Email: \texttt{igor@dcs.bbk.ac.uk}} \and 
	Viktor Zamaraev\thanks{Department of Computer Science, University of Liverpool, Liverpool, UK. Email: \texttt{viktor.zamaraev@liverpool.ac.uk}}}
\begin{document}
	\maketitle

\begin{abstract}
	Given two $n$-vertex graphs $G_1$ and $G_2$ of bounded treewidth,
is there an $n$-vertex graph $G$ of bounded treewidth having subgraphs 
isomorphic to $G_1$ and $G_2$?
Our main result is a negative answer to this question, in a strong sense:
we show that the answer is \emph{no} even if $G_1$ is a binary tree and $G_2$ is a ternary
tree. We also provide an extensive study of cases where
such `gluing' is possible.
In particular,
we prove that if $G_1$ has treewidth $k$ and $G_2$ has pathwidth $\ell$, then there is an $n$-vertex graph of treewidth at most $k + 3 \ell + 1$ containing both $G_1$ and $G_2$ as subgraphs.


\end{abstract}

\section{Introduction}
\paragraph{The main results and  motivation.} 
In this paper we consider the following question:
given two $n$-vertex graphs $G_1$ and $G_2$ of bounded treewidth,
is there an $n$-vertex graph $G$ of bounded treewidth having subgraphs 
isomorphic to $G_1$ and $G_2$?
Our main result is a negative answer to this question, in a strong sense:
we show that the answer is \emph{no} even if $G_1$ is a binary tree and $G_2$ is a ternary
tree. The proof is deep in the sense that it uses a number-theoretic argument that
is not evident from the statement of the question.
In addition to the main result, we also provide an extensive study of cases where
such `gluing' is possible.
In particular,
we prove that if $G_1$ has treewidth $k$ and $G_2$ has pathwidth $\ell$, 
then they can be united into a graph of treewidth at most $k + 3 \ell + 1$.

The motivation for this research comes from network design.
Many real-world systems can be modeled as networks, where the entities of the system are represented by the network's nodes, and interactions between the entities are modelled by the edges. In many complex systems, the entities can exhibit multiple types of interactions.
For example, nodes of a transportation network can be connected via two types of edges,
one type representing train connections and the other type representing flight connections. 
The abstraction of multilayer networks is a means of modelling complex systems more precisely,
by capturing the different ``layers'' of information \cite{KABGMP14}.
A natural direction in the research of multilayer networks is investigating the extent to which 
good properties of individual layers can be exploited in the treatment of the network formed by
combining the layers.

Consider the scenario of a multilayered network where we are provided
with layers of a specified topology close to that of a tree. We want to find
out whether these layers can be united together into a single network that
is also close to a tree. 
Enright, Meeks, and Ryan \cite{EMR17} consider several concrete applications
where the above pattern of layers occurs. 
 They show that when the layers are combined adversarially, good properties of individual layers are lost (unless
the structure of the layers is severely restricted).
Moreover, the same holds with high probability even if there are just two layers, each isomorphic to a path, that are combined randomly \cite{EMPS21}.
Our result naturally continues this line of research by considering the question from the designer's
perspective. In other words, we study the situation when the resulting network is not \emph{imposed}
on us by a (possibly random) adversary, but rather we have the power to decide how the  network will be formed from the layers. 
Our main result shows that even in this ideal case, we cannot, in general, avoid
arbitrarily large treewidth in the resulting network.


On the other hand, we show that in some special cases the two layers can always be combined in a network of bounded treewidth. In particular, our result concerning the gluing of a graph of a bounded treewidth and a graph of a bounded pathwidth
significantly generalises previous results stated in \cite{M05}, \cite{CF12}, and \cite{Q17}. 

\paragraph{Formal statements.}
Let $n$ be a positive integer. We denote by $[n]$ the set $\{1, \dots, n\}$, and by $S_n$
the symmetric group of all permutations of $[n]$.
For a permutation $\varphi \in S_n$ and sets $U \subseteq [n]$ and $E \subseteq {[n] \choose 2}$,
we write $\varphi(U) := \{\varphi(i) : i \in U\}$ and $\varphi(E) := \{\{\varphi(i), \varphi(j)\} : \{i, j\} \in E\}$.
Given two graphs $G_1 = ([n], E_1)$ and $G_2 = ([n], E_2)$, the {\em union of $G_1$ and $G_2$ along $\varphi$} is the graph $([n], \varphi(E_1) \cup E_2)$. 
A \emph{gluing of $G_1$ and $G_2$} is the union of $G_1$ and $G_2$ along some permutation.
We may think of this operation as first relabeling the vertices of $G_1$ according to $\varphi$
and then taking the union of the resulting graph with $G_2$ (see Figure~\ref{fig-tree-union} for illustration).

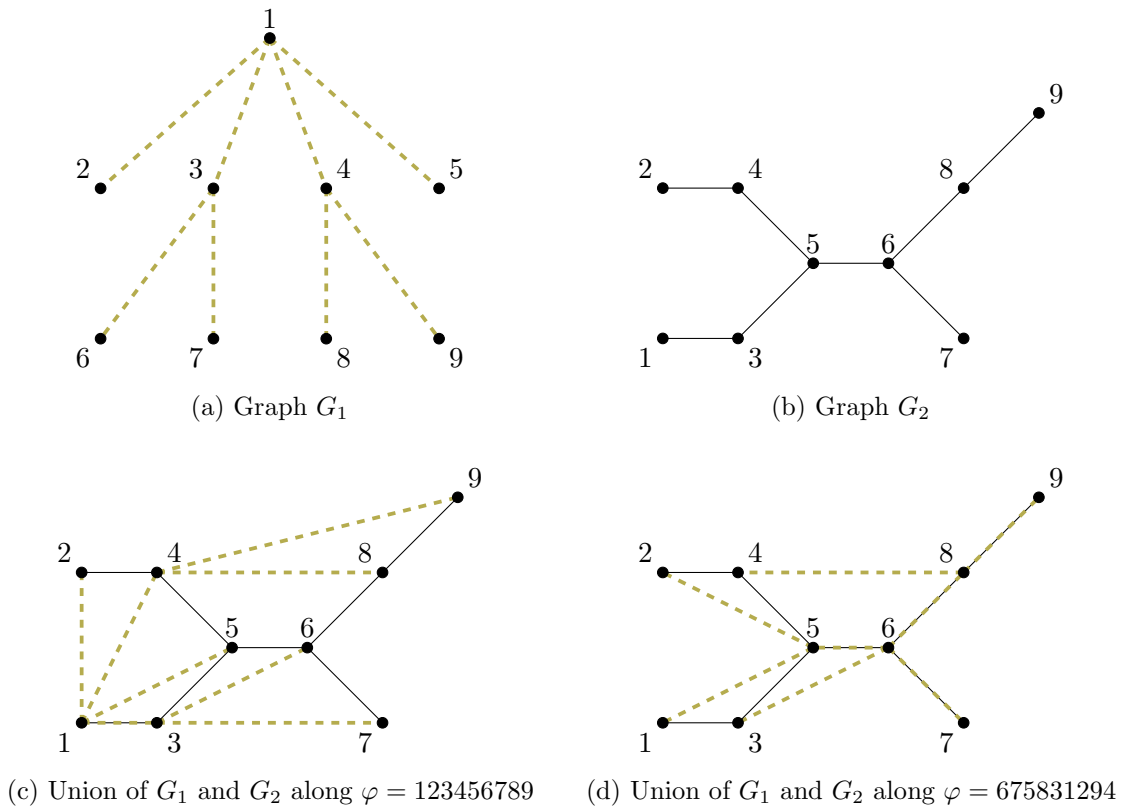
\begin{figure}[ht]
	\centering
	\begin{subfigure}[t]{0.49\linewidth}
		\centering
		\begin{tikzpicture}[scale=1, transform shape]
			
			\draw[olive!70, line width=0.5mm, dashed]
			(0, 4) -- (-2.25,2)
			(0, 4) -- (-0.75,2)
			(0, 4) -- (0.75,2)
			(0, 4) -- (2.25,2)
			(-0.75,2) -- (-2.25,0)
			(-0.75,2) -- (-0.75,0)
			(0.75,2) -- (0.75,0)
			(0.75,2) -- (2.25,0);
			
			\filldraw 
			(0,4) circle (2pt) node[above] {$1$}
			(-2.25,2) circle (2pt) node[above left] {$2$}
			(-0.75,2) circle (2pt) node[above left] {$3$}
			(0.75,2) circle (2pt) node[above right] {$4$}
			(2.25,2) circle (2pt) node[above right] {$5$}
			(-2.25,0) circle (2pt) node[below left] {$6$}
			(-0.75,0) circle (2pt) node[below left] {$7$}
			(0.75,0) circle (2pt) node[below right] {$8$}
			(2.25,0) circle (2pt) node[below right] {$9$};

		\end{tikzpicture}
		\captionsetup{justification=centering}
		\caption{Graph $G_1$}
		\label{middlepoint}	
	\end{subfigure}	
	\begin{subfigure}[t]{0.49\linewidth}
		\centering
		\begin{tikzpicture}[scale=1, transform shape]
			
			\filldraw 
			(0,-1) circle (2pt) node[below left] {$1$}
			(0,1) circle (2pt) node[above left] {$2$}
			(1,-1) circle (2pt) node[below right] {$3$}
			(1,1) circle (2pt) node[above right] {$4$}
			(2,0) circle (2pt) node[above] {$5$}
			(3,0) circle (2pt) node[above] {$6$}
			(4,-1) circle (2pt) node[below left] {$7$}
			(4,1) circle (2pt) node[above left] {$8$}
			(5,2) circle (2pt) node[above right] {$9$};
			
			\draw 
			(0, -1) -- (1, -1)
			(0, 1) -- (1, 1)
			(1, -1) -- (2, 0)
			(1, 1) -- (2, 0)
			(2, 0) -- (3, 0)
			(3, 0) -- (4, 1)
			(3, 0) -- (4, -1)
			(4, 1) -- (5, 2);

		\end{tikzpicture}
		\captionsetup{justification=centering}
		\caption{Graph $G_2$}
		\label{middlepointareas}	
	\end{subfigure}
	
	\bigskip
	
	\begin{subfigure}[t]{0.49\linewidth}
		\centering
		\begin{tikzpicture}[scale=1, transform shape]
			
			\draw (0, -1) -- (1, -1);
			
			\draw[olive!70, line width=0.5mm, dashed]     
			(0, -1) -- (1, -1)
			(0, -1) -- (1, 1)
			(0, -1) -- (0, 1)
			(0, -1) -- (2, 0)
			(1, -1) -- (3, 0)
			(1, -1) -- (4, -1)
			(1, 1) -- (4, 1)
			(1, 1) -- (5, 2);
			
			\draw 
			(0, 1) -- (1, 1)
			(1, -1) -- (2, 0)
			(1, 1) -- (2, 0)
			(2, 0) -- (3, 0)
			(3, 0) -- (4, 1)
			(3, 0) -- (4, -1)
			(4, 1) -- (5, 2);
			
			\filldraw 
			(0,-1) circle (2pt) node[below left] {$1$}
			(0,1) circle (2pt) node[above left] {$2$}
			(1,-1) circle (2pt) node[below right] {$3$}
			(1,1) circle (2pt) node[above right] {$4$}
			(2,0) circle (2pt) node[above] {$5$}
			(3,0) circle (2pt) node[above] {$6$}
			(4,-1) circle (2pt) node[below left] {$7$}
			(4,1) circle (2pt) node[above left] {$8$}
			(5,2) circle (2pt) node[above right] {$9$};
			
		\end{tikzpicture}
		\captionsetup{justification=centering}
		\caption{\small Union of $G_1$ and $G_2$ along $\varphi = 123456789$}
		\label{middlepoint}	
	\end{subfigure}
	\begin{subfigure}[t]{0.49\linewidth}
		\centering
		\begin{tikzpicture}[scale=1, transform shape]

			\draw
			(2, 0) -- (3, 0)
			(3, 0) -- (4, 1)
			(3, 0) -- (4, -1)
			(4, 1) -- (5, 2);
			
			\draw[olive!70, line width=0.5mm, dashed]     
			(3,0) -- (4,-1)
			(3,0) -- (2,0)
			(3,0) -- (4,1)
			(3,0) -- (1,-1)
			(2,0) -- (0,-1)
			(2,0) -- (0,1)
			(4,1) -- (5,2)
			(4,1) -- (1,1);
			
			\draw 
			(0,-1) -- (1,-1)
			(0, 1) -- (1, 1)
			(1, -1) -- (2, 0)
			(1, 1) -- (2, 0);
			
			\filldraw 
			(0,-1) circle (2pt) node[below left] {$1$}
			(0,1) circle (2pt) node[above left] {$2$}
			(1,-1) circle (2pt) node[below right] {$3$}
			(1,1) circle (2pt) node[above right] {$4$}
			(2,0) circle (2pt) node[above] {$5$}
			(3,0) circle (2pt) node[above] {$6$}
			(4,-1) circle (2pt) node[below left] {$7$}
			(4,1) circle (2pt) node[above left] {$8$}
			(5,2) circle (2pt) node[above right] {$9$};
			
		\end{tikzpicture}
		\captionsetup{justification=centering}
		\caption{Union of $G_1$ and $G_2$ along $\varphi = 675831294$}
		\label{middlepoint2}	
	\end{subfigure}		
	
	\caption{The union of two trees along different permutations. }
	\label{fig-tree-union}
	
\end{figure}
Our main theorem is the following.

\begin{restatable}{theorem}{main}
	\label{thm:main}
	For any $c > 0$, there exists $n \in \mathbb N$, and $n$-vertex trees $T_1$ and $T_2$ such that any gluing of $T_1$ and $T_2$ has treewidth at least $c$.
\end{restatable}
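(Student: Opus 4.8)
The plan is to build the trees by a recursive (iterated) construction and prove by induction on $c$ that every gluing of the level-$c$ trees has treewidth at least $c$; the level-$c$ trees are obtained by attaching many private copies of the level-$(c-1)$ trees along a broad, shallow ``skeleton''. Two observations guide the design. First, by Theorem~\ref{th:tw-pw} a tree (treewidth~$1$) glued with any graph of pathwidth~$\ell$ admits a gluing of treewidth at most $3\ell+2$; hence, as $c$ grows, both $T_1$ and $T_2$ must have unbounded pathwidth, which points to complete ternary trees (whose pathwidth equals their height) as the natural skeletons. Second, the base case is essentially free: for $c=2$ any two \emph{non-isomorphic} $n$-vertex trees work, because if a union $T_1\cup\varphi(T_2)$ were acyclic it would have only $n-1$ edges, forcing $\varphi(T_2)=T_1$ and hence $T_1\cong T_2$; so every union contains a cycle and has treewidth at least~$2$.

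For the inductive step, let $R_1,R_2$ be $m$-vertex trees, every gluing of which has treewidth at least $c-1$. Fix a height $h=h(c)$ much larger than $c$, let $S$ be the complete ternary tree of height $h$, form $T_1$ by hanging a private copy of $R_1$ off each leaf of $S$ and $T_2$ similarly with copies of $R_2$, add a few ``padding'' vertices so that $|T_1|=|T_2|=:n$, and attach to each copy a tiny \emph{marker} gadget, using disjoint marker shapes on the two sides. The markers make $T_1\not\cong T_2$ and, crucially, force any union $T_1\cup\varphi(T_2)$ of bounded treewidth to have a \emph{block-respecting} alignment: up to a bounded number of exceptional vertices, $\varphi$ maps each copy of $R_1$ onto a single copy of $R_2$. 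Now suppose some gluing $G=T_1\cup\varphi(T_2)$ has $\tw(G)\le c-1$. Fix a width-$(c-1)$ tree decomposition and a bag $B$ with $|B|\le c$ whose removal leaves components of at most half the size. Since $S$ has $3^h$ leaves and $|B|\le c\ll 3^h$, all but boundedly many copies of $R_1$ avoid $B$; by block-respectingness one such copy $X$ is matched by $\varphi$ to a copy $Y$ of $R_2$ with $\varphi(Y)$ essentially equal to $X$ and with $X\cup\varphi(Y)$ contained in one component of $G-B$. Then $G$ restricted to a bounded enlargement of $X\cup\varphi(Y)$ is itself a gluing of $R_1$ and $R_2$, so it has treewidth at least $c-1$; and the skeleton is engineered so that this recursive substructure, together with the high-pathwidth ternary structure of $S$ threading through all of the blocks, is forced into an obstruction (a bramble, or a grid minor) of order $c$ --- contradicting $\tw(G)\le c-1$. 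This completes the induction.

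The step I expect to be the main obstacle is the \emph{block-respecting reduction}: an arbitrary permutation may scramble the vertices of the two trees completely, and without controlling $\varphi$ one cannot locate an intact copy of $R_1$ paired with an intact copy of $R_2$ to recurse on. The markers must simultaneously keep $|T_1|=|T_2|$, be cheap enough not to blow up $n$ uncontrollably so that the recursion still terminates at some finite $n$, and be rigid enough that \emph{every} low-width alignment respects them; an alternative to the marker device is a dichotomy showing that a badly scrambled alignment already overloads some part of the skeleton and therefore has large treewidth on its own. By contrast, once a recursive copy has been extracted, combining it with the skeleton to gain the last unit of treewidth is a comparatively routine application of the bramble/grid-minor machinery.
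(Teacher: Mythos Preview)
There is a genuine gap, and it is not only the block-respecting reduction you flag.

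You build $T_1$ and $T_2$ over the \emph{same} skeleton $S$. Consider the permutation $\varphi$ that fixes $V(S)$ pointwise and, on each matched pair of blocks, applies a permutation $\sigma$ realising the minimum gluing treewidth $t\ge c-1$ of $R_1$ and $R_2$. The resulting union is $S$ with a copy of the $\sigma$-gluing hanging off each leaf by at most two edges; its treewidth is at most $t+1\le c$, and if $\sigma$ can be chosen to send the attachment vertex of $R_1$ to that of $R_2$ while keeping $t=c-1$ (which you have not ruled out), the treewidth is $c-1$ and your construction fails outright. Either way, your inductive argument does not establish $\tw\ge c$: in this gluing the two skeletons coincide, so there is nothing ``threading through the blocks'' beyond a single copy of $S$, and locating a subgraph of treewidth $c-1$ inside a graph assumed to have treewidth at most $c-1$ is no contradiction. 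The step you call comparatively routine is where the argument actually breaks. The block-respecting reduction is also a real obstacle---markers are just small subtrees that an arbitrary permutation has no reason to respect, and the dichotomy you propose as an alternative is the theorem restated---but the deeper issue is that the recursion gains nothing even when the permutation cooperates perfectly.

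The paper avoids these difficulties by not recursing at all. It fixes $T_1$ and $T_2$ to be the balanced binary and ternary trees $T_2(n)$ and $T_3(n)$ and proves directly that every balanced vertex cut of any gluing has many crossing edges, which lower-bounds clique-width (Lemmas~\ref{prop:ndcut} and~\ref{lem:edge-bound}) and hence treewidth. The mechanism is arithmetic: Lemma~\ref{lm:treeCuts} expresses $|U|$ as a signed sum of subtree sizes indexed by the cut edges; in $T_b(n)$ these sizes are essentially $n/b^i$, so a cut with few edges forces $|U|/n$ to be simultaneously close to a dyadic rational $a/2^p$ and to a triadic rational $a'/3^q$ inside $[\tfrac13,\tfrac23]$, and Lemma~\ref{lem:distance-between-sets} shows such numbers are far apart. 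The whole argument depends only on $|U|$ and never on $\varphi$, which is precisely what makes it work uniformly over all gluings.
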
 

On the positive side, we demonstrate that a graph of bounded treewidth and a graph of bounded pathwidth can be
glued into a graph of bounded treewidth. 

\begin{restatable}{theorem}{twpw}
	\label{th:tw-pw}
	Let $G_1$ be an n-vertex graph of treewidth at most $k$, and $G_2$ an $n$-vertex graph of pathwidth at most $\ell$. Then there exists a gluing of $G_1$ and $G_2$ that has treewidth at most $k+3\ell+1$.
\end{restatable}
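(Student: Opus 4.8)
The plan is to use a path decomposition of $G_2$ to guide the construction of the permutation $\varphi$, and then build a tree decomposition of the union whose bags combine a bag from the tree decomposition of $G_1$ with a ``window'' of a few consecutive bags from the path decomposition of $G_2$. Concretely, fix an optimal path decomposition $(B_1, \dots, B_m)$ of $G_2$ of width $\ell$, where we may assume (by standard normalisation) that consecutive bags differ by at most one vertex and that every vertex of $[n]$ has its own ``introduce'' bag; this gives a natural linear order $v_1, \dots, v_n$ on $[n]$ according to the position at which each vertex is introduced. Separately, fix an optimal tree decomposition $(T, \{A_x\}_{x \in V(T)})$ of $G_1$ of width $k$. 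The key move is to choose $\varphi$ so that the linear order that $G_1$'s vertices acquire inside $G_2$'s path decomposition is \emph{compatible} with a depth-first traversal of $T$: process the nodes of $T$ in DFS order, and whenever a vertex $u$ of $G_1$ first appears (i.e.\ is in some bag $A_x$ but in no earlier-visited bag), assign to it the next available position $v_i$ in the order above, setting $\varphi(u) = v_i$.

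With $\varphi$ fixed in this way, I would construct a tree decomposition of the union $H := ([n], \varphi(E_1) \cup E_2)$ using the same tree $T$ (possibly lightly subdivided so that each $T$-node handles one newly-introduced vertex). For a node $x$ of $T$, its new bag is $A'_x := A_x \cup W_x$, where $W_x$ is the union of the at most $3\ell+1$ bags of the path decomposition of $G_2$ lying in a bounded window around the positions occupied by the vertices of $A_x$. The size bound $k + 3\ell + 1$ (after the usual $+1$ convention for width versus bag size) should come from: $|A_x| \le k+1$ accounts for the $k$ term, and the window $W_x$ needs to cover, for each of the (at most $k+1$) vertices of $A_x$, the path-decomposition bag containing it — but because of the DFS compatibility, all vertices currently ``active'' in $A_x$ occupy a contiguous-ish range, so a window of $\ell + 1$ path-bags on each side plus the central one, i.e.\ roughly $3(\ell+1)$ vertices beyond $A_x$, suffices; tightening the bookkeeping is what yields exactly $3\ell+1$.

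The two things to verify are the standard tree-decomposition axioms. Edge coverage: an edge of $E_2$ lies in some $B_j$, hence in $W_x$ for any $x$ whose window reaches position $j$ — one must check every such $B_j$ is caught by some window, which follows because consecutive $A'_x$ along $T$ sweep through all positions monotonically thanks to the DFS-compatible labelling; an edge of $\varphi(E_1)$ is covered because its endpoints lie together in some $A_x \subseteq A'_x$. Connectivity: for a vertex $u \in [n]$, the nodes $x$ with $u \in A_x$ form a connected subtree of $T$ (from $G_1$'s decomposition), and the nodes $x$ with $u \in W_x$ form a connected subtree because $u$ occupies a single position and ``$x$'s window contains that position'' is an interval condition along the DFS order of $T$; the crux is that these two subtrees overlap — which is exactly why $\varphi$ was chosen to make $u$'s position lie within the window of the part of $T$ where $u$ is active.

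The main obstacle I anticipate is precisely this last compatibility requirement: ensuring the DFS-guided assignment of positions makes ``$u$ is active in $A_x$'' and ``$u$'s position is in $x$'s window'' genuinely compatible \emph{simultaneously for all vertices}, while keeping the window width at $3\ell+1$. A vertex of $G_1$ that is active over a long path in $T$ (a ``spine'' vertex) could in principle be far in position-order from the DFS-current vertex; the fix is to observe that at any node $x$, at most $k+1$ vertices of $G_1$ are active, so at most $k+1$ position-ranges are relevant, but more sharply, the DFS ordering forces the \emph{recently introduced} active vertices to be contiguous and the older ones to have been introduced at bounded position-distance — this is the delicate counting step, and getting the constant down to $3\ell+1$ rather than something like $k\ell$ is where the real work lies. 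I would handle it by assigning positions not one-per-vertex but in blocks of size $\ell+1$ aligned with the path decomposition's structure, so that ``window of radius one block on each side'' does the job.
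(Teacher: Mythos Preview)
Your high-level intuition---align a DFS order on the tree decomposition of $G_1$ with the linear order coming from $G_2$'s path decomposition---is exactly right and is what the paper does. But the window construction you describe has a genuine gap, and you have already put your finger on it: a vertex $u$ that is introduced near the root of $T$ and stays active along a long root-to-leaf path receives a small position (say position $1$), yet appears in bags $A_x$ whose other vertices have arbitrarily large positions. No window of bounded size around ``the positions occupied by the vertices of $A_x$'' can cover this, so $|W_x|$ is not bounded in terms of $k$ and $\ell$ at all, let alone by $3\ell+1$. Your proposed fix of assigning positions in blocks of size $\ell+1$ does not touch this issue: the spread of positions inside a single $A_x$ is governed by the shape of $T$, not by $\ell$.

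The paper avoids this by \emph{not} using windows. It first takes a \emph{smooth} tree decomposition of $G_1$ (so node $a_i$ introduces exactly one new vertex, labelled $k+i$), then replaces $T$ by its \emph{tilt} $T^*$, which has maximum degree $3$, and sets $Z_{a_i}=X_{a_i}\cup X_{p_i}$ (width $k+1$). To accommodate $G_2$ it uses the vertex-separation characterisation of pathwidth: for each $i$ it adds the separator set $S(k+i)$ of size at most $\ell$ to every bag on the $T^*$-path from $a_{i-1}$ to $a_i$. Because these additions follow the DFS \emph{walk} and $T^*$ has degree at most $3$, each node is passed at most three times, so each bag receives at most $3\ell$ extra vertices. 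Crucially, only $\{k+i\}\cup S(k+i)\subseteq Z_{a_i}$ is needed to cover $G_2$'s edges; there is no need for all of $A_x$ to sit in a short position-interval, which is precisely the requirement your window approach cannot meet. The two missing ingredients in your proposal are thus the tilt (which is what produces the constant $3$) and the walk-based addition of separator sets in place of position windows.
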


In the case of more restricted classes, we can provide an even more refined view as described in the two statements below. 
\begin{restatable}{lemma}{twvc}
	\label{obs:tw-vc}
	Any gluing of 
	an $n$-vertex graph of vertex cover number at most $k$ and
	an $n$-vertex graph of treewidth at most $t$ has treewidth at most $k+t$.
\end{restatable}

\begin{restatable}{lemma}{pathwidth}
	\label{lem:pathwidth}
	Let $G_1$ and $G_2$ be $n$-vertex graphs of pathwidth $k$ and $t$ respectively.
	Then there is a gluing of $G_1$ and $G_2$ of pathwidth at most $k+t$.
\end{restatable}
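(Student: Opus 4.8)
The plan is to reformulate pathwidth in a ``linear'' way and to align two such linear structures, one for each $G_i$, by the permutation $\varphi$: concretely, I would work with path decompositions and superimpose them bag by bag along their forget-orders.

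First I would fix a path decomposition $\cP_1=(D_1,\dots,D_m)$ of $G_1$ of width $k$ and a path decomposition $\cP_2=(D'_1,\dots,D'_{m'})$ of $G_2$ of width $t$; after refining each so that consecutive bags differ in at most one vertex, I may assume that within each decomposition the vertices have pairwise distinct last bags. Let $u_1,\dots,u_n$ list $V(G_1)=[n]$ in increasing order of the index of their last bag in $\cP_1$, and let $w_1,\dots,w_n$ do the same for $G_2$ and $\cP_2$; I then define $\varphi\in S_n$ by $\varphi(u_i)=w_i$ and let $G=([n],\varphi(E_1)\cup E_2)$ be the resulting union. Writing $X_i$ for the last bag of $\cP_1$ containing $u_i$ and $Y_i$ for the last bag of $\cP_2$ containing $w_i$, the two facts I would extract are: (a) $u_i\in X_i$ and $X_i\subseteq\{u_i,u_{i+1},\dots,u_n\}$, because $u_1,\dots,u_{i-1}$ are forgotten strictly before $u_i$, with the analogous statement for $Y_i$; and (b) if $u_ju_l\in E_1$ with $j<l$, then $u_l\in X_j$, since the bag witnessing this edge occurs no later than $X_j$ while $u_l$ survives at least until $X_l$, whose index is at least that of $X_j$ — again with the analogous statement in $G_2$.

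Next I would put $B_i:=\varphi(X_i)\cup Y_i$ for $i=1,\dots,n$ and verify that $(B_1,\dots,B_n)$ is a path decomposition of $G$: each vertex $w_i$ of $[n]$ lies in $B_i$; every edge of $\varphi(E_1)$ is covered by fact (b) in $G_1$ and every edge of $E_2$ by fact (b) in $G_2$; and for a fixed vertex $w_r$ the set $\{i:w_r\in B_i\}$ is an interval, because the index of $X_i$ in $\cP_1$ increases with $i$ (the forgets happen in the order $u_1,u_2,\dots$), so $\{i:u_r\in X_i\}$ and $\{i:w_r\in Y_i\}$ are intervals, both ending at $i=r$ by fact (a), and their union is again an interval. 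Finally $|B_i|\le|\varphi(X_i)|+|Y_i|-1\le(k+1)+(t+1)-1=k+t+1$, since $w_i$ is common to $\varphi(X_i)$ and $Y_i$, so $(B_1,\dots,B_n)$ has width at most $k+t$, which proves the lemma.

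I expect the only delicate point to be the interplay between the permutation and the two decompositions: one must choose $\varphi$ so that it genuinely transports the forget-order of $\cP_1$ onto that of $\cP_2$, which is exactly what makes fact (b) available for the $\varphi(E_1)$-edges in the superimposed decomposition; everything else is bookkeeping. Conceptually, this is just the observation that pathwidth depends on the vertex ordering (equivalently, on the vertex separation number $\vs$) and not on the labelling, so the two orderings can be made to coincide — an observation with no treewidth analogue, consistent with the fact that the treewidth version of this statement is false in general.
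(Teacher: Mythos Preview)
Your proof is correct and is essentially the same idea as the paper's: align the two linear structures underlying the path decompositions via the permutation $\varphi$, so that the merged decomposition has bags of size at most $(k+1)+(t+1)-1$. The only difference is packaging: the paper invokes Kinnersley's theorem (pathwidth $=$ vertex separation number), relabels each $G_i$ according to an optimal layout, and observes that $\vs_\pi$ is subadditive under union along a common layout, whereas you carry out the equivalent construction directly on path decompositions via the forget-order --- as you yourself note in your closing remark.
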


\noindent
In particular, Lemma \ref{lem:pathwidth} implies that two caterpillars can always be glued into a graph of treewidth
at most 2.

Finally, our results can be stated in terms of graph classes.
In particular, for two classes of graphs $\mathcal{X}$ and $\mathcal{Y}$ a \emph{gluing of $\mathcal{X}$ and $\mathcal{Y}$}
is a minimal class which, for every pair of $n$-vertex graphs $G_1 \in \mathcal{X}$ and $G_2 \in \mathcal{Y}$,
contains a gluing of $G_1$ and $G_2$ of minimum possible treewidth. A graph parameter is said to be bounded
for a class $\mathcal{X}$ if there exists a constant $c$ such that for every graph in $\mathcal{X}$ the graph
parameter does not exceed $c$; otherwise the graph parameter is said to be unbounded in $\mathcal{X}$.
\begin{theorem}
	\label{thm:characterization}
	A gluing of two minor-closed classes of graphs $\mathcal{X}$ and $\mathcal{Y}$ has bounded treewidth if and only if
	both $\mathcal{X}$ and $\mathcal{Y}$  have bounded treewidth and one of them has bounded pathwidth.
\end{theorem}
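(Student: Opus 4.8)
\noindent\emph{Proof plan.} We treat the two implications separately, and use three routine facts throughout. First, the set of gluings of an ordered pair $(G_1,G_2)$ equals, up to relabelling of vertices, that of $(G_2,G_1)$, and treewidth and pathwidth are relabelling-invariant, so the roles of $\mathcal{X}$ and $\mathcal{Y}$ are symmetric. Second, every gluing of $G_1$ and $G_2$ contains a relabelled copy of $G_1$ and a copy of $G_2$ as spanning subgraphs, and treewidth is monotone under taking subgraphs. Third, the classes of interest---those of bounded treewidth, bounded pathwidth, or bounded vertex cover number, which is all we really need---are closed under taking subgraphs and under adding isolated vertices, so any $G_1\in\mathcal{X}$ and $G_2\in\mathcal{Y}$ may be placed on a common vertex set without changing the parameters involved. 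For the ``if'' direction, assume both classes have treewidth at most $k$ and, by symmetry, that $\mathcal{Y}$ has pathwidth at most $\ell$. For any $G_1\in\mathcal{X}$ and $G_2\in\mathcal{Y}$ on a common vertex set, Theorem~\ref{th:tw-pw} provides a gluing of treewidth at most $k+3\ell+1$, so a minimum-treewidth gluing of $G_1$ and $G_2$---and hence every graph in a gluing of $\mathcal{X}$ and $\mathcal{Y}$---has treewidth at most $k+3\ell+1$.

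For the ``only if'' direction, suppose a gluing $\mathcal{Z}$ of $\mathcal{X}$ and $\mathcal{Y}$ has treewidth at most $c$. By subgraph-monotonicity, $\tw(H)\ge\max\{\tw(G_1),\tw(G_2)\}$ for every gluing $H$ of any $G_1\in\mathcal{X}$ and $G_2\in\mathcal{Y}$ on a common vertex set; applying this to a minimum-treewidth gluing, which lies in $\mathcal{Z}$, gives $\tw(G_1),\tw(G_2)\le c$. Hence both classes have bounded treewidth, and it remains to show that one of them has bounded pathwidth.

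Assume, for contradiction, that both $\mathcal{X}$ and $\mathcal{Y}$ have unbounded pathwidth. Fix $c$ and let $T_1,T_2$ be the $m$-vertex trees given by Theorem~\ref{thm:main} (for some $m=m(c)$), so that every gluing of $T_1$ and $T_2$ has treewidth at least $c$. By the ``excluding a forest'' theorem of Bienstock, Robertson, Seymour and Thomas---a graph with no minor isomorphic to a given forest $F$ has pathwidth less than $|V(F)|$---the unboundedness of pathwidth in $\mathcal{X}$ forces $\mathcal{X}$ to contain a graph $G_1$ having $T_1$ (indeed any prescribed forest) as a minor, and symmetrically $\mathcal{Y}$ contains a graph $G_2$ with $T_2$ as a minor. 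If one can show that, for a suitable choice of such $G_1$ and $G_2$, every gluing of $G_1$ and $G_2$ has treewidth at least $c$, then a minimum-treewidth gluing of $G_1$ and $G_2$ belongs to $\mathcal{Z}$ and has treewidth at least $c$; since $c$ is arbitrary, $\mathcal{Z}$ has unbounded treewidth, a contradiction that completes the proof.

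The crux, and the step I expect to be the main obstacle, is exactly this last implication. A gluing $\varphi(G_1)\cup G_2$ contains a $T_1$-minor model on the $\varphi(G_1)$-edges and a $T_2$-minor model on the $G_2$-edges, but these models are in general incompatible---there is no obvious way to contract the gluing onto a gluing of $T_1$ and $T_2$---and since gluing a graph with an isomorphic copy of itself along an isomorphism returns the graph unchanged, no purely local argument can suffice. A further difficulty is that the graph $G_1$ supplied by the excluded-forest theorem may be vastly larger than $T_1$ (bounded treewidth together with pathwidth at least $m$ forces exponentially many vertices), so an adversarial $\varphi$ might keep the $T_1$-structure of $\varphi(G_1)$ disjoint from the $T_2$-structure of $G_2$. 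I expect the resolution to replace the black-box excluded-forest theorem by a more hands-on extraction: since $G_1$ has bounded treewidth but large pathwidth, it has a tree decomposition whose underlying tree has large pathwidth and hence contains a large ternary-branching topological minor, which pulls back to a richly branching family of bounded-size separators in $G_1$; one then re-runs the argument behind Theorem~\ref{thm:main} directly on these branching families inside $\varphi(G_1)$ and $G_2$, the key point being that no permutation $\varphi$ can prevent the two families from interacting. Quantifying that forced interaction is where the real work lies.
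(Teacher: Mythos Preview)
Your outline coincides with the paper's---Theorem~\ref{th:tw-pw} for the ``if'' direction, subgraph monotonicity for treewidth-boundedness of both classes, and the excluded-forest theorem together with Theorem~\ref{thm:main} for the pathwidth condition---but you are missing the paper's device for the step you call the crux. Instead of locating $G_1\in\mathcal X$ and $G_2\in\mathcal Y$ that contain the bad trees only as \emph{minors} and then trying to lift the lower bound from gluings of the trees to gluings of $G_1,G_2$, the paper simply replaces each class by its minor closure. Because treewidth and pathwidth are minor-monotone, this leaves the right-hand side of the equivalence unchanged; and once the classes are minor-closed with unbounded pathwidth, the Robertson--Seymour result says they contain \emph{every} forest. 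The specific $n$-vertex trees of Theorem~\ref{thm:main} then lie in $\mathcal X$ and $\mathcal Y$ themselves, so that theorem applies directly with no transfer needed---your proposed ``hands-on extraction'' is unnecessary.

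Your unease is not entirely misplaced, however. The paper's reduction tacitly requires that the \emph{left}-hand side (boundedness of the gluing) is also unchanged under passing to minor closures, and this is not argued; indeed, for completely unrestricted classes one can arrange $\mathcal X,\mathcal Y$ of unbounded pathwidth whose members share almost no vertex counts, making the gluing class vacuously bounded. The paper's argument is clean once $\mathcal X$ and $\mathcal Y$ are minor-closed, and the one-line reduction to that case is exactly where the gap you honestly flagged and the paper's unspoken one coincide.
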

\begin{proof}
	Clearly, if $\mathcal{X}$ or $\mathcal{Y}$ has unbounded treewidth, any gluing of the two classes has unbounded
	treewidth. Since both classes are minor-closed, a gluing of $\mathcal{X}$ and $\mathcal{Y}$ 
	has bounded treewidth if and only if at least one of the classes has bounded 
	pathwidth: this follows from Theorems \ref{th:tw-pw} and \ref{thm:main}, 
	and a result of Robertson and Seymour saying that a minor-closed class has bounded pathwidth
	 if and only if the class excludes a forest \cite{RS83}.   
\end{proof}

The rest of paper is organised as follows. 
In Section \ref{sec:prelim}, we introduce some standard preliminaries.
In Section \ref{sec:counterexample}, we prove that there are pairs of trees that cannot
be glued into a graph of small treewidth.
In Section \ref{sec:positive}, we show that certain graphs of bounded treewidth
can be glued into a graph of bounded treewidth. 
In Section \ref{sec:conclusion}, we conclude the paper with some open questions.

\section{Preliminaries}\label{sec:prelim}
For a graph $G$ we denote by $V(G)$ and $E(G)$ the vertex set and the edge set of $G$ respectively.
A vertex $v \in V(G)$ is a \emph{neighbour} of another vertex $u \in V(G)$ if $\{u,v\}$ is an edge
of $G$. The \emph{neighbourhood} of $u$, denoted $N_G(u)$, is the set of all neighbours of $u$. The \emph{degree} of $u$ is the number of its neighbours.
For a vertex set $U \subseteq V(G)$ the subgraph of $G$ induced by $U$ is denoted by $G[U]$.
A set of vertices $C \subseteq V(G)$ is a \emph{vertex cover} of $G$ if every edge of $G$ is incident with a vertex in $C$. 
The \emph{vertex cover number} of $G$ is the minimum number of vertices in a vertex cover of $G$. 
As usual, we will denote by $K_s$ a complete graph on $s$ vertices and by $K_{1,s}$ a star with $s$ leaves. A \emph{caterpillar} is a tree that becomes a path upon removal of all its leaves.

A \emph{tree decomposition} of a graph $G$ is a pair $\left(\cT,(X_i)_{i\in V(\cT)}\right)$, 
where $\cT$ is a tree and $X_i\subseteq V(G)$ for each $i\in V(\cT)$, such that
\begin{enumerate}
	\item[(I)] $\bigcup_{i\in V(\cT)}X_i=V(G)$;
	\item[(II)] for every edge $\{u,v\}\in E(G)$, there is a $i\in V(\cT)$ such that
	$u,v\in X_i$; and
	\item[(III)] for every $v \in V(G)$ the subgraph $\cT_v$ of $\cT$
	induced by $\{i \in V(\cT) ~|~ v \in X_{i} \}$ is connected, i.e. $\cT_v$ is a tree.
\end{enumerate}
So as to avoid confusion with the vertices of $G$, we say that the elements of $V(\cT)$ are the \emph{nodes} of $\cT$. For a node $i$ we say that the corresponding set $X_i$ is the \emph{bag} of $i$. The width of the tree decomposition $\left(\cT,(X_i)_{i\in V(\cT)}\right)$
is $\max_{i\in V(\cT)}|X_i|-1$. The \emph{treewidth of $G$} is the smallest width of a tree decomposition of $G$.

Alternatively, treewidth can be defined via partial $k$-trees. A \emph{$k$-tree} is a graph that can be obtained by starting with $K_k$ and repeatedly adding vertices and connecting them to a clique of size $k$. A {\em partial $k$-tree} is a (not necessarily induced) subgraph of a $k$-tree. The treewidth of a graph $G$ is equal to the least $k$ such that $G$ is a partial $k$-tree.

A \emph{path decomposition} of a graph $G$ is a tree decomposition $\left(\cP,(X_i)_{i\in V(T)}\right)$ 
in which the tree $\cP$ is a path. The {\em pathwidth} of $G$ is equal to the smallest width of any path decomposition of $G$. 

Pathwidth admits a characterisation via vertex separation number that we will employ in one of the proofs.
A \emph{layout} of a graph is a linear ordering of its vertices.
Let $G=(V,E)$ be a graph and let $\pi$ be a layout of $G$. The \emph{vertex separation number of $G$ with respect to $\pi$} is defined as 
$$
\vs_{\pi}(G) = \max_{v\in V}|\{w\in V: \exists x\in N_G(w) \mbox{ such that } \pi(w) < \pi(v)\le \pi(x) \}|.
$$ 
The \emph{vertex separation number} $\vs(G)$ of $G$ is the minimum of $\vs_{\pi}(G)$ over all possible layouts $\pi$ of $G$.

\begin{theorem}[Kinnersley \cite{K92}] \label{thm:pathwidth-vs-number}
	The vertex separation number of a graph equals its pathwidth.
\end{theorem}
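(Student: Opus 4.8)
The plan is to prove the two inequalities $\vs(G) \le \pw(G)$ and $\pw(G) \le \vs(G)$ separately, in each direction converting a witnessing object of one kind (a path decomposition, respectively a layout) into a witnessing object of the other kind of no larger width; the two constructions turn out to be essentially dual to one another. For $\pw(G) \le \vs(G)$, I would start from an optimal layout $\pi = v_1, \dots, v_n$ with $\vs_\pi(G) = s$ and build a path decomposition on the path $1 - 2 - \cdots - n$ by declaring, for each position $i$, the bag $X_i := \{v_i\} \cup \{v_j : j < i \text{ and } v_j \text{ has a neighbour } v_\ell \text{ with } \ell \ge i\}$. By the definition of $\vs_\pi$ the second set has size at most $s$, so every bag has size at most $s+1$ and the width is at most $s$. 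It then remains to verify the three axioms of a tree decomposition, which is routine: axiom (I) holds since $v_i \in X_i$; axiom (II) holds because for an edge $\{v_i, v_j\}$ with $i < j$ the endpoint $v_i$ qualifies to be placed in $X_j$; and axiom (III) holds because the set of indices $j$ with $v_i \in X_j$ is precisely the interval from $i$ to $\max\bigl(i,\,\max\{\ell : v_\ell \in N_G(v_i)\}\bigr)$, hence connected in the path.

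For the reverse inequality $\vs(G) \le \pw(G)$, I would first normalise an optimal path decomposition so that its bags form a sequence $X_1, \dots, X_m$ with $X_1 = \emptyset$ in which consecutive bags differ in exactly one vertex; then every vertex is introduced at a unique node, and this standard step does not increase the width. Ordering the vertices of $G$ by the node at which they are introduced yields a layout $\pi$. The heart of the argument is the following claim: if $v$ is introduced at node $t$ and $w$ is any vertex counted by $\vs_\pi$ at $v$ (that is, $\pi(w) < \pi(v)$ and $w$ has a neighbour $x$ with $\pi(x) \ge \pi(v)$), then $w \in X_t \setminus \{v\}$. Indeed, $w$ and $x$ lie together in some bag $X_j$; since $x$ cannot occur before its own introduction and $x$ is introduced at or after $t$, we get $j \ge t$, while $w$ is introduced strictly before $t$, so the connectivity axiom forces $w \in X_t$. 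Hence the number of vertices counted by $\vs_\pi$ at $v$ is at most $|X_t| - 1 \le \pw(G)$, and maximising over $v$ gives $\vs(G) \le \vs_\pi(G) \le \pw(G)$.

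Neither direction poses a real obstacle; the only point requiring care is the bookkeeping in the $\vs \le \pw$ direction, namely checking that the normalisation can be carried out without increasing the width and then invoking connectivity correctly to place $w$ in $X_t$. A reader who prefers to avoid the normalisation can instead work directly with an arbitrary path decomposition, ordering the vertices by the index of the leftmost bag containing each of them (breaking ties arbitrarily), at the cost of a slightly more delicate version of the same claim.
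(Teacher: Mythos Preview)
The paper does not prove this statement at all: it is quoted as a known result of Kinnersley \cite{K92} and used as a black box. So there is no ``paper's own proof'' to compare against.

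That said, your argument is correct and is essentially the standard proof of the equivalence. Both directions are sound: the layout-to-path-decomposition construction with bags $X_i = \{v_i\} \cup \{v_j : j<i,\ \exists\, \ell \ge i \text{ with } v_jv_\ell \in E\}$ is exactly the usual one, and your verification of the three axioms is fine. In the other direction, the normalisation to a path decomposition in which consecutive bags differ by a single vertex is standard and width-preserving, and your connectivity argument correctly places each ``separating'' vertex $w$ into $X_t \setminus \{v\}$. The alternative you sketch at the end (ordering by leftmost bag without normalising) also works.
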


\section{Unions of trees have unbounded treewidth} \label{sec:counterexample}

This section is devoted to our main result, that the union of two trees can have arbitrarily large treewidth. In fact, we show that it can have arbitrarily large clique-width. Before proceeding to the proof of this, found in Section~\ref{subsec:core}, we start with some necessary preliminaries.

\subsection{Trees and cuts}

Let $G=(V,E)$ be a graph. For a vertex set $U \subseteq V$ we denote by $\overline{U}$ the set $V \setminus U$.
The partition $(U, \overline{U})$ of the vertex set of $G$ is called the \emph{$U$-cut} of $G$.
The \emph{$U$-cut-set} in $G$ is the set of edges of $G$ that have one endpoint in $U$ and the other endpoint in $\overline{U}$. The edges in the \emph{$U$-cut-set} are called the \emph{crossing edges} of the $U$-cut.
We denote by $e_{G}(U)$ the number of crossing edges of the $U$-cut in $G$.
The $U$-cut is called \emph{balanced} if $\frac{n}{3} \leq |U| \leq \frac{2n}{3}$, where $n$ is the number of vertices
in $G$.

Now let $T = (V, E)$ be a rooted tree. The \emph{level of a vertex} $v$ in $T$, denoted by $\lev(v)$, is the distance from $v$ to the root of $T$.
In particular, the level of the root is 0.
We denote by $T^v$ the subtree of $T$ rooted at $v$, and by $n_v$ the number of vertices in $T^v$. By convention, the vertices of an edge $e = (u, v)$ of $T$ are ordered so that
$\lev(u) = \lev(v) - 1$. Given a set $U \subseteq V$, we write $\mathbbm 1_U$ for the indicator function given by $\mathbbm 1_U(v) = 1$ if $v \in U$, and $0$ otherwise.

\begin{lemma}\label{lm:treeCuts}
	Let $T=(V,E)$ be a tree on $n$ vertices rooted at $r$. Let $U \subseteq V$ be a vertex set and let $e_i = (u_i, v_i)$, $i = 1, \dots, k$ be the edges of
	the $U$-cut-set. Then
	$$
	|U| = \mathbbm{1}_{U}(r) \cdot n + \sum_{i=1}^k (-1)^{\mathbbm{1}_{U}(u_i)} \cdot n_{v_i}.
	$$
\end{lemma}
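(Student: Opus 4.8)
The plan is to prove the identity by induction on the number $k$ of crossing edges of the $U$-cut, but it turns out to be cleaner to induct on the structure of the tree. Before doing that, let me record the natural interpretation: the crossing edges $e_i = (u_i, v_i)$, with $u_i$ the endpoint closer to the root, partition $V$ into regions; deleting all $e_i$ splits $T$ into subtrees, each of which lies entirely inside $U$ or entirely inside $\overline U$ (since within a component of $T - \{e_1, \dots, e_k\}$ there is no crossing edge). So $|U|$ is the total number of vertices in those components that lie in $U$.

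The cleanest route is the following. First I would handle the base case $k = 0$: then no edge of $T$ crosses the cut, so (by connectedness of $T$) either $U = V$ or $U = \emptyset$, and the formula reads $|U| = \mathbbm 1_U(r) \cdot n$, which is correct in both cases. For the inductive step with $k \geq 1$, I would pick a crossing edge $e_j = (u_j, v_j)$ whose lower endpoint $v_j$ is at maximum level among all $u_i, v_i$ — equivalently, such that the subtree $T^{v_j}$ contains no other crossing edge. Then every vertex of $T^{v_j}$ lies on the same side of the cut as $v_j$. Now consider the tree $T' = T - V(T^{v_j})$, rooted at $r$ (this is still a tree and still contains $r$, since $v_j \neq r$), and the set $U' = U \cap V(T')$. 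The crossing edges of the $U'$-cut in $T'$ are exactly $e_1, \dots, e_k$ with $e_j$ removed, i.e. $k-1$ of them, and the level/orientation conventions are inherited. Apply the induction hypothesis to $(T', U')$:
\[
|U'| = \mathbbm 1_{U'}(r) \cdot n' + \sum_{i \neq j} (-1)^{\mathbbm 1_U(u_i)} \cdot n_{v_i},
\]
where $n' = n - n_{v_j}$ and $\mathbbm 1_{U'}(r) = \mathbbm 1_U(r)$ since $r \in V(T')$. It remains to relate $|U|$ to $|U'|$. There are two cases according to whether $v_j \in U$ or not, equivalently (since $e_j$ is crossing) according to whether $u_j \in U$ or not. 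If $u_j \notin U$, then $v_j \in U$, so all $n_{v_j}$ vertices of $T^{v_j}$ are in $U$, giving $|U| = |U'| + n_{v_j}$; and $(-1)^{\mathbbm 1_U(u_j)} \cdot n_{v_j} = (-1)^0 \cdot n_{v_j} = n_{v_j}$, so adding the $j$-th term to the formula for $|U'|$ and to $|U'|$ simultaneously gives exactly the claimed formula for $|U|$. If $u_j \in U$, then $v_j \notin U$, no vertex of $T^{v_j}$ is in $U$, so $|U| = |U'|$; and $(-1)^{\mathbbm 1_U(u_j)} \cdot n_{v_j} = -n_{v_j}$, but this $j$-th term is exactly cancelled by... wait — here one must be slightly careful: in this case the vertices of $T^{v_j}$ contribute $0$ to $|U|$, yet the formula contains the term $-n_{v_j}$. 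This is accounted for because $u_j \in U$ means $u_j$ is counted in $|U'|$, but the induction hypothesis applied to $T'$ does not "know" that $u_j$ had a child subtree of size $n_{v_j}$ removed; the bookkeeping works out precisely because the sum over $i \neq j$ in the formula for $|U'|$ plus the term $(-1)^{\mathbbm 1_U(u_j)} n_{v_j} = -n_{v_j}$ plus $\mathbbm 1_U(r) n'$, when we substitute $n' = n - n_{v_j}$ and $\mathbbm 1_U(r) = 1$ (note $r \in U$ whenever $u_j \in U$ forces... no). Let me not grind this; the point is that one checks both cases reduce to an identity in $n, n', n_{v_j}$, and the only subtlety is the term $\mathbbm 1_U(r) \cdot n$ versus $\mathbbm 1_U(r) \cdot n'$, whose difference $\mathbbm 1_U(r) \cdot n_{v_j}$ must be absorbed correctly.

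I expect the main obstacle to be exactly that last bookkeeping: tracking how the root term $\mathbbm 1_U(r) \cdot n$ changes when we pass from $T$ to the smaller tree $T'$ (it becomes $\mathbbm 1_U(r) \cdot (n - n_{v_j})$), and making sure the discrepancy $\mathbbm 1_U(r) \cdot n_{v_j}$ is reconciled with the sign $(-1)^{\mathbbm 1_U(u_j)}$ of the removed term together with whether $v_j \in U$. An alternative, perhaps slicker, presentation avoids the root issue entirely: write $|U| = \sum_{v \in V} \mathbbm 1_U(v)$ and use a telescoping identity $\mathbbm 1_U(v) = \sum$ (something over the path from $r$ to $v$), or directly use the component decomposition of $T - \{e_1,\dots,e_k\}$ described above and compute the size of each component as $n_{v_i}$ minus the sizes of the subtrees hanging off it through other crossing edges — this is essentially an inclusion–exclusion / discrete Green's theorem argument. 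Either way the combinatorial content is the same; I would go with the deletion-of-a-deepest-subtree induction since it keeps the invariants (tree-ness, root membership, the level convention) most transparent.
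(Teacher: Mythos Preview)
Your induction-by-deleting-a-subtree approach has a gap that goes beyond the $n$ versus $n'$ discrepancy you flagged. When you pass from $T$ to $T' = T - V(T^{v_j})$, you apply the induction hypothesis and write
\[
|U'| = \mathbbm 1_{U}(r)\cdot n' + \sum_{i\neq j} (-1)^{\mathbbm 1_U(u_i)}\, n_{v_i},
\]
but the subtree sizes that appear here must be computed in $T'$, not in $T$. For any $i \neq j$ with $v_i$ an ancestor of $v_j$ (and there can be several such $i$ on the path from $r$ to $v_j$), we have $n'_{v_i} = n_{v_i} - n_{v_j}$, so the formula you wrote is wrong by a multiple of $n_{v_j}$ that you never account for. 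Your two-case analysis afterwards implicitly assumes $n'_{v_i} = n_{v_i}$ for all $i\neq j$, which is why it does not close. (It can be salvaged: the total correction is $\bigl[\mathbbm 1_U(r) + \sum_{i\in A}(-1)^{\mathbbm 1_U(u_i)}\bigr] n_{v_j}$ where $A$ is the set of such ancestor indices, and a telescoping argument along the root-to-$v_j$ path shows this equals $\bigl[\mathbbm 1_U(v_j) - (-1)^{\mathbbm 1_U(u_j)}\bigr]n_{v_j}$; but this is exactly the complication you were hoping to avoid.)

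The paper sidesteps all of this by keeping the tree $T$ fixed and instead modifying $U$: having chosen a minimal crossing edge $e_k=(u_k,v_k)$, it sets $U' = U \cup V(T^{v_k})$ or $U' = U \setminus V(T^{v_k})$, whichever removes $e_k$ from the cut-set. Now $n$, $r$, the remaining $e_i$, and every $n_{v_i}$ are unchanged, $\mathbbm 1_{U'}(r)=\mathbbm 1_U(r)$, and $|U'| = |U| \pm n_{v_k}$ with the sign matching $(-1)^{\mathbbm 1_U(u_k)}$. The bookkeeping is then a one-liner. I recommend switching to this ``flip the subtree's side'' reduction rather than the ``delete the subtree'' reduction.
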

\begin{proof}
	We prove the lemma by induction on $k$. For $k=1$, there is a unique edge $e_1 = (u_1, v_1)$ in the
	$U$-cut-set. It is easy to see that $u_1$ belongs to $U$ if and only if $r$ belongs to $U$. If they both belong to $U$, then $|U| = n - n_{v_1}$. Otherwise, $|U| = n_{v_1}$. In either case, $|U| = \mathbbm{1}_{U}(r) \cdot n + (-1)^{\mathbbm{1}_{U}(u_i)} \cdot n_{v_1}$, as required.
	
	Let now $k > 1$ and assume, without loss of generality, that the edge $e_k = (u_k, v_k)$
	is a \emph{minimal} edge of the $U$-cut-set, i.e., no edge of the subtree $T^{v_k}$ belongs to
	the $U$-cut-set.
	
	Suppose first that $u_k \in U$ and $v_k \in \overline{U}$. This assumption and the minimality of
	$e_k$ implies that
	$V(T^{v_k}) \subseteq \overline{U}$. By moving $V(T^{v_k})$ from one side of the cut to the other, we will
	remove exactly one edge, namely $e_k$, from the cut-set. More formally, let $U' = U \cup V(T^{v_k})$.
	Then clearly the $U'$-cut-set is equal to $\{e_1, e_2, \ldots, e_{k-1}\}$. By the induction hypothesis, and since $\mathbbm{1}_{U'}(w)=\mathbbm{1}_{U}(w)$ for every $w\in \{r, u_1, \dots, u_{k-1}\}$ we have
	$$
	|U'| = \mathbbm{1}_{U}(r) \cdot n + \sum_{i=1}^{k - 1} (-1)^{\mathbbm{1}_{U}(u_i)} \cdot n_{v_i},
	$$
	and therefore $|U|  = |U'| - n_{v_k} =  |U'| + (-1)^{\mathbbm{1}_{U}(u_k)} \cdot n_{v_k } = \mathbbm{1}_{U}(r) \cdot n + \sum_{i=1}^k (-1)^{\mathbbm{1}_{U}(u_i)} \cdot n_{v_i}$, as required.
	
	Assume now that $u_k \in \overline{U}$ and $v_k \in U$. Then, similarly to the above argument,
	$V(T^{v_k}) \subseteq U$ and if we define $U' = U \setminus V(T^{v_k})$, then
	the $U'$-cut-set is equal to $\{e_1, e_2, \ldots, e_{k-1}\}$.
	Therefore, $|U|  = |U'| + n_{v_k} = |U'| + (-1)^{\mathbbm{1}_{U}(u_k)} \cdot n_{v_k} = \mathbbm{1}_{U}(r) \cdot n + \sum_{i=1}^k (-1)^{\mathbbm{1}_{U}(u_i)} \cdot n_{v_i}$, which completes the proof.
\end{proof}

\subsection{Balanced trees}	

For a natural number $b \geq 2$, a {\em $b$-ary tree} is a rooted tree in which each vertex has at most $b$ children. For $\ell \in \mathbb N$, the {\em $\ell$-th level} of a $b$-ary tree is the set of vertices with level $\ell$. The $\ell$-th level of the tree is called \emph{last} if it is non-empty and the $(\ell+1)$-th level
is empty.
The $\ell$-th level is said to be {\em filled}, or {\em full}, if it contains $b^\ell$ vertices. In particular, if a level is filled, so are all the levels before it. 
A $b$-ary tree is {\em perfect} if all its non-empty levels are filled.
A rooted $b$-ary tree $T$ is called {\em balanced} if:
\begin{enumerate}
	\item every non-empty level of $T$, except possibly the last one, is filled;
	
	\item if $x$ and $y$ are two vertices on the same level of $T$, then $|n_x - n_y| \leq 1$.
\end{enumerate}

\begin{lemma} \label{lem:balanced-tree}
	For any integers $b \geq 2$ and $n \geq 1$, there exists a balanced $b$-ary tree with $n$ vertices.
\end{lemma}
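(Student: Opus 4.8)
The plan is to construct the balanced $b$-ary tree on $n$ vertices directly, by a greedy level-by-level filling, and then verify that the second balancedness condition (the subtree sizes on each level differ by at most $1$) holds automatically. First I would handle the trivial observation that every level of a balanced tree before the last is forced to be full, so the shape of the tree is determined by two numbers: the depth $d$ (the index of the last non-empty level) and the number $m$ of vertices actually placed on level $d$, where $1 \le m \le b^d$. Since a perfect $b$-ary tree of depth $d-1$ has $\frac{b^d - 1}{b - 1}$ vertices, for a given $n \ge 1$ there is a unique $d \ge 0$ with $\frac{b^d - 1}{b-1} < n \le \frac{b^{d+1}-1}{b-1}$, and then $m := n - \frac{b^d-1}{b-1}$ lies in $[1, b^d]$. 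This pins down the "profile" of the tree; what remains is to decide how the $m$ last-level vertices are distributed among the $b^{d-1}$ potential parents on level $d-1$ so that condition 2 is met.

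The key step is the distribution rule: place the $m$ children on level $d$ as evenly as possible among the level-$(d-1)$ vertices, i.e. $b^{d-1} - (m \bmod b^{d-1})$ \emph{hmm}, more precisely give $\lceil m / b^{d-1} \rceil$ children to some of the level-$(d-1)$ vertices and $\lfloor m / b^{d-1} \rfloor$ to the rest — but to make the subtree-size condition propagate up the tree, I would be more careful and do this recursively: the ``even distribution'' should be chosen so that, at \emph{every} level $\ell \le d-1$, the numbers of last-level descendants hanging below the level-$\ell$ vertices differ by at most a controlled amount. The cleanest way is induction on $b$ and $n$ using the recursive structure: write $n - 1 = q b + s$ with $0 \le s < b$ wait — better, think of the root as having $b$ subtrees whose sizes are as equal as possible, namely $s$ of them of size $\lceil (n-1)/b \rceil$ and $b - s$ of size $\lfloor (n-1)/b \rfloor$ where $s = (n-1) \bmod b$; by the induction hypothesis each of these subtrees can be taken to be a balanced $b$-ary tree, and I then need to check that gluing them under a common root preserves both conditions.

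The main obstacle is condition 2 and the interaction between the two balancedness requirements. Requiring subtrees of the root to have sizes differing by at most $1$ is easy, but I must confirm that the recursively constructed children-trees, being themselves balanced, have \emph{compatible depths} — two balanced $b$-ary trees whose sizes differ by at most $1$ have the same depth or depths differing by at most $1$, and one has to rule out the bad case where near a threshold $n = \frac{b^d-1}{b-1}$ a size difference of $1$ would force one subtree to be a full perfect tree of depth $d-2$ and another to spill onto level $d-1$; a short computation shows that when this happens the smaller subtree is still ``full up to its last level,'' so condition 1 survives, and then for condition 2 I need the quantitative claim that in a balanced tree of size $N$ the subtree sizes at level $\ell$ all lie in $\{\lfloor N/b^\ell\rfloor \text{-ish}\}$ — precisely, they take at most two consecutive values. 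I would prove this auxiliary ``at most two consecutive subtree sizes per level'' statement by induction alongside the main construction, since it is exactly the strengthening of condition 2 that makes the induction go through. Given that, verifying properties 1 and 2 for the assembled tree is routine arithmetic with floors and ceilings, and the existence claim follows. One can also invoke Lemma~\ref{lm:treeCuts} if an explicit formula for $n_v$ is wanted, but for mere existence the recursive construction suffices.
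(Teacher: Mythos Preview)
Your approach is correct but genuinely different from the paper's. The paper argues by induction on $n$: starting from the perfect tree, it repeatedly adds a single leaf at the end of a root-to-bottom path that at each step follows a child with the fewest descendants, and proves that every vertex on this path is a global minimiser of $n_x$ on its level, so incrementing along it preserves balancedness. Your plan instead builds the tree top-down, taking $b$ recursively-constructed balanced subtrees of sizes $\lfloor (n-1)/b\rfloor$ and $\lceil (n-1)/b\rceil$ and attaching them to a root. The paper's route has a lighter inductive step (one local-to-global minimum argument), whereas yours needs the cross-subtree check for condition~2 that you flag as the main obstacle; that check is indeed routine once you observe that in a balanced tree of size $N$ the level-$\ell$ subtree sizes are exactly $\lfloor (N-\tfrac{b^\ell-1}{b-1})/b^\ell\rfloor$ and $\lceil\cdot\rceil$ (this follows immediately from condition~2 plus the fact that they sum to $N-\tfrac{b^\ell-1}{b-1}$), and then a short case analysis shows that passing from $N$ to $N+1$ keeps the union of these two-element sets inside two consecutive integers. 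One small correction: ``at most two consecutive subtree sizes per level'' is not a strengthening of condition~2, it \emph{is} condition~2; the actual strengthening you need for the induction is the explicit floor/ceiling formula. Also, Lemma~\ref{lm:treeCuts} concerns cut-sets, not subtree sizes, and is not relevant here.
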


\begin{proof}
	The statement is obvious when $n = \frac{b^\ell - 1}{b - 1}$ for some natural $\ell \geq 1$, since perfect $b$-ary trees with $\ell$ levels are balanced. 
	In general, suppose $\frac{b^\ell - 1}{b - 1} \leq n < \frac{b^{\ell + 1} - 1}{b - 1}$, and assume there is a balanced $b$-ary tree $T$ on $n$ vertices. The bounds on $n$ imply that $T$ has $\ell$ filled levels. The $(\ell+1)$-th level of $T$ is non-filled and possibly empty. We think of this final level as consisting of $b^\ell$ slots, $n - \frac{b^\ell - 1}{b - 1}$ of which are already filled with leaves. We describe how to add a leaf to one of the empty slots in this level, in order to obtain a balanced $b$-ary tree on $n + 1$ vertices.
	
	Let $x_0, x_1, \ldots, x_{\ell-1}$ be a path from the root $x_0$ to a vertex at level $\ell-1$ (i.e., the lowest filled level)
	such that for every $i \in [\ell-1]$ vertex $x_i$ is a child of $x_{i-1}$ with the minimum number of descendants.
	First, we claim that every vertex $x_i$ of the path has the minimum number of descendants among the vertices at level $i$.
	This is clearly true for $x_0$ as there is only one vertex at level 0. Assume the claim is true for $x_{i-1}$, $i \geq 1$,
	and suppose, towards a contradiction, there is a vertex $v$ at level $i$ such that $n_v < n_{x_i}$.
	The choice of $x_i$ implies that the parent $p(v)$ of $v$ is distinct from the parent $x_{i-1}$ of $x_i$.
	Since the tree is balanced and $x_i$ is the child of $x_{i-1}$ with the least number of descendants, we conclude 
	that $n_x = n_v + 1$ for every child $x$ of $x_{i-1}$, and $n_y \in \{ n_v, n_v +1 \}$ for every child $y$ of $p(v)$.
	Consequently, as $x_{i-1}$ and $p(v)$ both have $b$ children, it follows that $p(v)$ has fewer descendants 
	than $x_{i-1}$ does, which contradicts the induction assumption.
	
	Now, to complete the proof, we add the new vertex as a child of $x_{\ell-1}$. This extension of the tree increases $n_{x_i}$
	by exactly 1 for every $i = 0, 1, \ldots, \ell-1$, and does not affect the number of descendants of any other vertex in the tree.
	Since $x_i$ is a vertex with the minimum number of descendants at level $i$, it is easy to see that the balancedness property
	is preserved in the new tree.
	%
\end{proof}

Figure~\ref{fig:comp-tree-ex} provides an illustration of balanced binary and ternary trees; adding the leaves in the order given by their labels preserves balancedness at each step.

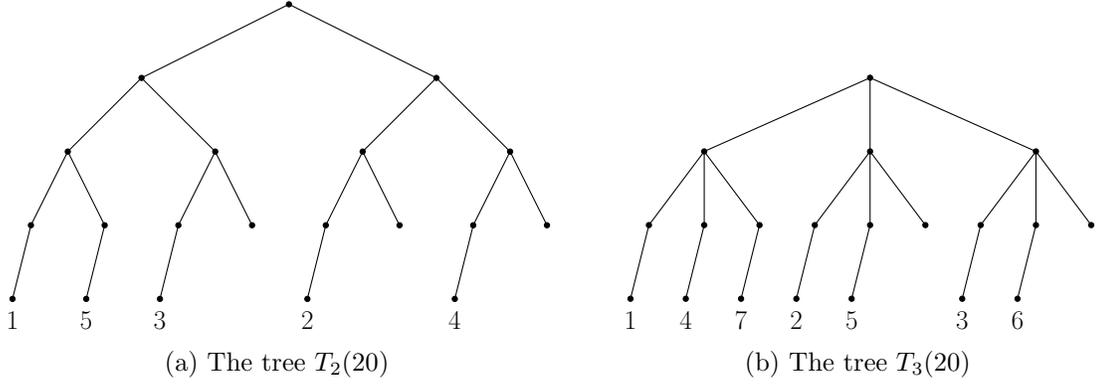
\begin{figure}[ht]
	\centering
	\begin{subfigure}[t]{0.49\linewidth}
		\centering
		\begin{tikzpicture}[scale=0.49, transform shape]
			
			\filldraw (0, 0) circle (2pt) node{};
			\draw (0, 0) -- (-4, -2);
			\draw (0, 0) -- (4, -2);
			
			\filldraw (-4, -2) circle (2pt) node{};
			\filldraw (4, -2) circle (2pt) node{};
			\draw (-4, -2) -- (-6, -4);
			\draw (-4, -2) -- (-2, -4);
			\draw (4, -2) -- (2, -4);
			\draw (4, -2) -- (6, -4);
			
			\filldraw (-6, -4) circle (2pt) node{};
			\filldraw (-2, -4) circle (2pt) node{};
			\filldraw (2, -4) circle (2pt) node{};
			\filldraw (6, -4) circle (2pt) node{};
			\draw (-7, -6) -- (-6, -4);
			\draw (-5, -6) -- (-6, -4);
			\draw (-3, -6) -- (-2, -4);
			\draw (-1, -6) -- (-2, -4);
			\draw (1, -6) -- (2, -4);
			\draw (3, -6) -- (2, -4);
			\draw (5, -6) -- (6, -4);
			\draw (7, -6) -- (6, -4);
			
			\filldraw (-7, -6) circle (2pt) node{};
			\filldraw (-5, -6) circle (2pt) node{};
			\filldraw (-3, -6) circle (2pt) node{};
			\filldraw (-1, -6) circle (2pt) node{};
			\filldraw (1, -6) circle (2pt) node{};
			\filldraw (3, -6) circle (2pt) node{};
			\filldraw (5, -6) circle (2pt) node{};
			\filldraw (7, -6) circle (2pt) node{};
			\draw (-7, -6) -- (-7.5, -8);
			\draw (1, -6) -- (0.5, -8);
			\draw (-3, -6) -- (-3.5, -8);
			\draw (5, -6) -- (4.5, -8);
			\draw (-5, -6) -- (-5.5, -8);

			\filldraw (-7.5, -8) circle (2pt) node[below = 0.2cm] {\huge 1};
			\filldraw (0.5, -8) circle (2pt) node[below = 0.2cm] {\huge 2};			
			\filldraw (-3.5, -8) circle (2pt) node[below = 0.2cm] {\huge 3};
			\filldraw (4.5, -8) circle (2pt) node[below = 0.2cm] {\huge 4};
			\filldraw (-5.5, -8) circle (2pt) node[below = 0.2cm] {\huge 5};

		\end{tikzpicture}
		\captionsetup{justification=centering}
		\caption{The tree $T_2(20)$}
		\label{fig:comp-tree-ex-a}	
	\end{subfigure}	
	\begin{subfigure}[t]{0.49\linewidth}
		\centering
		\begin{tikzpicture}[scale=0.49, transform shape]
			
			\filldraw (0, 0) circle (2pt) node{};
			\draw (0, 0) -- (-4.5, -2);
			\draw (0, 0) -- (0, -2);
			\draw (0, 0) -- (4.5, -2);

			\filldraw (-4.5, -2) circle (2pt) node{};
			\filldraw (0, -2) circle (2pt) node{};
			\filldraw (4.5, -2) circle (2pt) node{};
			\draw (-4.5, -2) -- (-6, -4);
			\draw (-4.5, -2) -- (-4.5, -4);
			\draw (-4.5, -2) -- (-3, -4);
			\draw (0, -2) -- (-1.5, -4);
			\draw (0, -2) -- (0, -4);
			\draw (0, -2) -- (1.5, -4);
			\draw (4.5, -2) -- (3, -4);
			\draw (4.5, -2) -- (4.5, -4);
			\draw (4.5, -2) -- (6, -4);
			
			\filldraw (-6, -4) circle (2pt) node{};
			\filldraw (-4.5, -4) circle (2pt) node{};
			\filldraw (-3, -4) circle (2pt) node{};
			\filldraw (-1.5, -4) circle (2pt) node{};
			\filldraw (0, -4) circle (2pt) node{};
			\filldraw (1.5, -4) circle (2pt) node{};
			\filldraw (3, -4) circle (2pt) node{};
			\filldraw (4.5, -4) circle (2pt) node{};
			\filldraw (6, -4) circle (2pt) node{};
			\draw (-6, -4) -- (-6.5, -6);
			\draw (-1.5, -4) -- (-2, -6);
			\draw (3, -4) -- (2.5, -6);
			\draw (-4.5, -4) -- (-5, -6);
			\draw (0, -4) -- (-0.5, -6);
			\draw (4.5, -4) -- (4, -6);
			\draw (-3, -4) -- (-3.5, -6);
			
			\filldraw (-6.5, -6) circle (2pt) node[below = 0.2cm] {\huge 1};
			\filldraw (-2, -6) circle (2pt) node[below = 0.2cm] {\huge 2};
			\filldraw (2.5, -6) circle (2pt) node[below = 0.2cm] {\huge 3};
			\filldraw (-5, -6) circle (2pt) node[below = 0.2cm] {\huge 4};
			\filldraw (-0.5, -6) circle (2pt) node[below = 0.2cm] {\huge 5};
			\filldraw (4, -6) circle (2pt) node[below = 0.2cm] {\huge 6};
			\filldraw (-3.5, -6) circle (2pt) node[below = 0.2cm] {\huge 7};

		\end{tikzpicture}
		\captionsetup{justification=centering}
		\caption{The tree $T_3(20)$}
		\label{fig:comp-tree-ex-b}	
	\end{subfigure}
	\caption{Balanced binary and ternary trees}
	\label{fig:comp-tree-ex}
\end{figure}

\begin{remark}
It is not hard to see that up to isomorphism, there is a unique balanced $b$-ary tree on $n$ vertices. For convenience, we will denote by $T_b(n)$ 
some fixed balanced $b$-ary tree with vertex set $[n]$ from the isomorphism class.
\end{remark}

The key fact we use about balanced trees is that the number $n_{v}$ of vertices in the tree rooted at $v$ only depends on $n$ and on the level of $v$, up to a small error:

\begin{lemma} \label{lem:subtree-size}
	Let $T$ be a balanced $b$-ary tree. Then for any vertex $v$ of $T$
	$$
		n_{v} = \frac{n - ((b^{\lev(v)} - 1)/(b - 1))}{b^{\lev(v)}} + \beta,
	$$ 
	where $|\beta| \leq 1$.
\end{lemma}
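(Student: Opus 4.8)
The plan is to prove the estimate by a two-sided counting argument on the subtree $T^v$, using the two balancedness axioms to control the sizes of the subtrees hanging below $v$. Fix a vertex $v$ at level $d := \lev(v)$. The rooted subtree $T^v$ is itself a $b$-ary tree, and I claim it is "almost" balanced: its first several levels (those corresponding to the filled levels of $T$ below level $d$) are full. Concretely, if $T$ has $\ell$ filled levels (so levels $0,\dots,\ell-1$ are full and level $\ell$ is the possibly-empty last level), then inside $T^v$ levels $0,\dots,\ell-1-d$ are full, and level $\ell-d$ contains exactly the descendants of $v$ lying on the last level of $T$.

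The first step is to count the vertices in $T$ that lie on level $\ell$ and are descendants of $v$; call this number $m_v$. The crux is that, by balancedness axiom 2 applied at level $d$, the quantities $n_x$ for $x$ ranging over the $b^{d}$ vertices of level $d$ differ pairwise by at most $1$; since these $n_x$ sum to $n - \frac{b^{d}-1}{b-1}$ (all vertices of $T$ except those on levels $0,\dots,d-1$), each $n_x$ equals $\frac{n - ((b^{d}-1)/(b-1))}{b^{d}}$ up to an additive error of at most $1$. This already gives the stated formula with $\beta = n_v - \frac{n-((b^d-1)/(b-1))}{b^d}$ and $|\beta|\le 1$, essentially directly: the average of the $n_x$ is exactly the displayed main term, and no $n_x$ can deviate from the average by more than $1$ when all pairwise differences are at most $1$ (if some $n_x$ were $\ge \text{avg}+1 + \varepsilon$ it would force some other to be $< \text{avg}$, fine, but to get deviation $>1$ one needs two values differing by $>1$).

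So the main steps, in order, are: (1) observe $\sum_{x:\ \lev(x)=d} n_x = n - \frac{b^{d}-1}{b-1}$, which holds because every vertex of $T$ outside the first $d$ levels lies in exactly one $T^x$ with $\lev(x)=d$, and level $d$ is full (here I use balancedness axiom 1, noting $d\le \ell-1$ or else $T^v$ is trivial and the bound is immediate); (2) invoke balancedness axiom 2 to say all these $b^{d}$ summands are within $1$ of each other; (3) conclude from a trivial averaging lemma that each summand, in particular $n_v$, is within $1$ of the average $\frac{n-((b^{d}-1)/(b-1))}{b^{d}}$, which is exactly the claimed main term. I should also handle the edge case where level $d$ is not full, i.e. $d=\ell$ and $v$ is on the last level: then $n_v=1$, and the main term is $\frac{n-((b^{\ell}-1)/(b-1))}{b^{\ell}}$, which is the number of filled slots on the last level divided by $b^\ell$, a quantity in $[0,1)$ since the last level is non-filled; so $|\beta|=|1-(\text{that})|\le 1$. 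Actually the same averaging argument covers this uniformly if one is careful about what "level $d$" means, but it is cleanest to note it separately.

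I do not expect a genuine obstacle here; the only thing requiring a little care is the bookkeeping of the arithmetic term $\frac{b^{d}-1}{b-1}$ (the number of vertices on levels $0,\dots,d-1$ of a perfect $b$-ary tree) and making sure the claimed identity $\sum_{\lev(x)=d} n_x = n - \frac{b^d-1}{b-1}$ is justified from balancedness axiom 1 rather than assumed. A secondary small point is the averaging step: it is worth stating explicitly that if $a_1,\dots,a_N$ are integers with $|a_i-a_j|\le 1$ for all $i,j$, then each $a_i$ is within $1$ of $\frac{1}{N}\sum_j a_j$ — indeed the $a_i$ take at most two consecutive integer values, and the average lies between them. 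With these pieces the lemma follows in a few lines.
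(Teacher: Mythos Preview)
Your proposal is correct and follows essentially the same approach as the paper: handle the last-level case separately, then for earlier levels observe that the $b^{\lev(v)}$ subtree sizes at level $\lev(v)$ sum to $n-\frac{b^{\lev(v)}-1}{b-1}$ and (by balancedness axiom~2) differ pairwise by at most~$1$, so each is within~$1$ of the average. Your write-up is in fact more explicit than the paper's in justifying the averaging step; the only minor inaccuracy is that the last level can be filled (when $T$ is perfect), giving main term exactly~$1$ and $\beta=0$, but this does not affect the bound.
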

\begin{proof}
	If $v$ is on the last level, the approximation holds trivially. Otherwise, the number $\frac{n - ((b^{\lev(v)} - 1)/(b - 1))}{b^{\lev(v)}}$ is what we obtain if the vertices of $T$ without the top $\lev(v)$ levels are evenly divided among all $b^{\lev(v)}$ trees rooted at the same level as $v$. Balancedness of the tree ensures that the vertices are as evenly divided as possible. In particular, for every vertex $u$ at level $\lev(v)$
	the number $n_u$ is within $1$ of the above average. 
\end{proof}

\subsection{Unbounded clique-width }\label{subsec:core}
The purpose of this section is to prove that any gluing of large enough balanced binary and ternary trees has large clique-width. As large clique-width implies large treewidth, this result will imply Theorem \ref{thm:main}. 
We denote by $\cw(G)$ the clique-width of graph $G$. Since we will not actually use clique-width directly, we omit its definition for brevity.
For a permutation $\varphi \in S_n$, we denote by $G_\varphi(n)$ the union 
of $T_2(n)$ and $T_3(n)$ along $\varphi$. 
Formally, we will prove the following

\begin{restatable}{theorem}{thmcw}
	\label{thm:cw}
	For any $c > 0$, there exists $n \in \mathbb N$ such that $\cw(G_\varphi(n) ) > c$ for any $\varphi \in S_n$.
\end{restatable}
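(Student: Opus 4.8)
The idea is to use a well-structured balanced separator in the two trees and argue that, in any gluing $G_\varphi(n)$, we can read off a large ``grid-like'' pattern that is known to have large clique-width (equivalently, a large number of distinct neighbourhood types across a cut). Recall that for a graph $H$ of clique-width $w$ and any vertex partition $(A,B)$, the bipartite graph between $A$ and $B$ has at most $w$ (actually $2^w$, but a linear bound in the relevant quantity suffices) distinct ``types'' of vertices on the $A$-side with respect to $B$; more usefully, if $H$ has a cut $(A,B)$ such that the bipartite subgraph between $A$ and $B$ contains an induced matching of size $m$, or more generally a ``chain of $m$ nested neighbourhoods'', then $\cw(H) = \Omega(\log m)$, while if it contains a $K_{m,m}$-type biclique-cover obstruction the bound is even stronger. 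The plan is therefore: fix $n$ large; for a given $\varphi$, identify a balanced cut $(U,\overline U)$ of the host vertex set $[n]$ that is simultaneously ``cheap'' on one of the trees and forced to be ``expensive and richly structured'' on the other.

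\medskip

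\noindent\textbf{Key steps, in order.}
\begin{enumerate}
	\item[(1)] \emph{A cut that is cheap in $T_2(n)$.} Using Lemma~\ref{lm:treeCuts} together with the subtree-size estimate of Lemma~\ref{lem:subtree-size}, show that $T_2(n)$ has many balanced cuts with a bounded cut-set: concretely, cutting off a single subtree $T_2^v$ with $v$ at a level where $n_v \approx n/3$ gives a balanced cut with only one crossing edge. More flexibly, for each choice of a small level we get a whole ``scale'' of balanced cuts of $T_2(n)$ obtained by moving around $O(1)$ subtrees, so the family of balanced cuts of $T_2(n)$ with $O(1)$ crossing edges is large and, crucially, its trace on $[n]$ is combinatorially rich (it realises many different subsets of a linear-size portion of $[n]$, because the subtrees sit at arbitrary positions of the host labelling $\varphi$).
	\item[(2)] \emph{Every balanced cut is expensive in $T_3(n)$ — but in a structured way.} By the same two lemmas applied to $T_3(n)$, show that for \emph{every} balanced cut $(U,\overline U)$ of the vertex set, the cut-set in $T_3(n)$ must be large: if it had $k = O(1)$ crossing edges, Lemma~\ref{lm:treeCuts} would force $|U|$ to be a sum of $O(1)$ subtree sizes $n_{v_i}$ (plus possibly $n$), and since each $n_{v_i}$ is, by Lemma~\ref{lem:subtree-size}, within $1$ of $n/3^{\lev(v_i)}$, the attainable values of $|U|$ form an $O(1)$-size ``arithmetic-like'' set that cannot cover the whole balanced window $[n/3,2n/3]$ once $n$ is large — giving a contradiction with the balancedness chosen in step (1). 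Hence $e_{T_3(n)}(U) \to \infty$ as $n \to \infty$, uniformly over balanced $U$.
	\item[(3)] \emph{From a large, position-flexible cut to a clique-width obstruction.} Combine (1) and (2): pick the $T_2$-cheap balanced cut from (1) but realised so that the $\Theta(n)$ host-vertices whose side we control are spread out along whatever linear structure a hypothetical low-clique-width decomposition of $G_\varphi(n)$ would impose; the $T_3$-edges then provide $\omega(1)$ crossing edges that, by the branching structure of a balanced $b$-ary tree, can be chosen pairwise ``independent'' (an induced matching) or nested. This yields in $G_\varphi(n)$ a cut across which the bipartite graph has an induced matching / nested-neighbourhood chain of size growing with $n$, whence $\cw(G_\varphi(n))$ grows with $n$. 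Choosing $n$ large enough that this growing quantity exceeds $c$ finishes the proof.
\end{enumerate}

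\medskip

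\noindent\textbf{Main obstacle.} Steps (1) and (2) are essentially bookkeeping with the two displayed lemmas, and I expect them to go through cleanly. The genuinely hard part is step (3): turning ``for every balanced cut, one of the two trees contributes many crossing edges, and the other contributes a position we are free to choose'' into an actual lower bound on clique-width. Clique-width lower bounds are not simply about the size of a single cut-set — one must produce, within the glued graph, a robust obstruction (a large induced matching across \emph{some} cut, or a long nested-neighbourhood sequence, or a suitable ``$k$-rich'' cut in the sense of the standard clique-width lower-bound toolkit) that survives \emph{all} choices of $\varphi$. The delicate point is that $\varphi$ is adversarial: it may try to align the expensive $T_3$-cut-set with the cheap $T_2$-side so that the crossing edges bunch up and become ``parallel'' (all sharing endpoints), which is cheap for clique-width. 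Defeating this requires exploiting the fact that balanced $b$-ary trees have crossing edges distributed across many disjoint subtrees at a common level, so no single vertex can absorb them; formalising that this forces a genuine induced-matching-type obstruction regardless of $\varphi$ is where the real work lies, and is presumably what the authors' argument (via a careful counting of neighbourhood classes across a decomposition-induced cut, using the ``balanced cut'' flexibility of $T_2(n)$ to hit many different classes) is designed to handle.
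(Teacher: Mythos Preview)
Your proposal contains the right arithmetic germ --- the incompatibility between $2$-adic and $3$-adic approximations of $|U|/n$ --- but the logical structure around it is wrong in two places, and as written the argument does not go through.

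\textbf{Step (2) is false as stated.} You claim that \emph{every} balanced cut of $T_3(n)$ has a large cut-set. This is not true: take $U$ to be the vertex set of a subtree rooted at a level-$1$ vertex of $T_3(n)$; then $|U| \approx n/3$ by Lemma~\ref{lem:subtree-size}, so the cut is balanced, yet it has exactly one crossing edge. Your supporting reasoning (``the attainable values of $|U|$ form an $O(1)$-size set'') is also off: with $k$ crossing edges the attainable sizes are signed sums of $k$ subtree sizes, and since the levels can be chosen from $\Theta(\log n)$ possibilities there are polylogarithmically many, not $O(1)$, such values --- and some of them certainly lie in $[n/3,2n/3]$. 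What \emph{is} true, and what the paper proves, is the joint statement: no balanced cut can be cheap in $T_2(n)$ \emph{and} in $T_3(n)$ simultaneously, because then $|U|/n$ would be close to a point of $R_{2,c}$ and also close to a point of $R_{3,c}$, contradicting the lower bound $d(R_{2,c},R_{3,c}) \geq 3^{-2c}$. The quantifier order matters: the conclusion is that \emph{every} balanced cut of the glued graph $G_\varphi(n)$ has many crossing edges, not that one particular tree resists all balanced cuts.

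\textbf{Step (3) is both unnecessary and, as sketched, unsound.} Once you know that every balanced $U$-cut of $G_\varphi(n)$ has at least $k$ crossing edges, the conversion to clique-width is a two-line argument, not the hard part: since $G_\varphi(n)$ has maximum degree at most $7$, at least $e_G(U)/49$ vertices of $U$ have pairwise distinct neighbourhoods in $\overline U$, so $\mu_G(U) \geq e_G(U)/49 \geq k/49$ for every balanced $U$, whence $\cw(G_\varphi(n)) \geq \mu(G_\varphi(n)) \geq k/49$ by Lemma~\ref{prop:ndcut}. By contrast, your proposed route --- exhibit \emph{one} cut with a large induced matching and conclude large clique-width --- does not work in general: a perfect matching on $2m$ vertices has a cut with an induced matching of size $m$ but clique-width $2$. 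Clique-width lower bounds via $\mu$ require control of \emph{all} balanced cuts, which is exactly what the corrected version of step~(2) delivers and which makes your step~(3), with all its worries about adversarial $\varphi$ aligning edges, evaporate.
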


Our starting point is a result from \cite{wqo-vs-cw} that gives a lower bound for clique-width. Let $U \subseteq V(G)$ and $x, y \in U$. We say that $x$ and $y$ are \emph{$U$-similar} if their sets of neighbours outside $U$ coincide. It is not difficult to see that $U$-similarity is an equivalence relation on $U$; we denote the number of equivalence classes by $\mu_G(U)$ and define 
$$
	\mu(G) := \min\limits_{\frac{|V(G)|}{3} \leq |U| \leq \frac{2|V(G)|}{3}} \mu_G(U).
$$

\begin{lemma}[\cite{wqo-vs-cw}, Lemma~4]
	\label{prop:ndcut}
	For any graph $G$, $\mu(G) \leq \cw(G)$.
\end{lemma}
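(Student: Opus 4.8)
The plan is to extract, from an optimal clique-width expression for $G$, a single balanced cut $U$ that already witnesses $\mu(G)\le\cw(G)$. Set $k=\cw(G)$ and $n=|V(G)|$, and fix a $k$-expression producing $G$; after a harmless normalisation we may assume its disjoint-union operations are binary. Let $\tau$ be the parse tree of this expression: its leaves are in bijection with $V(G)$ (each creating one vertex with a label from $[k]$), its binary nodes are disjoint unions $\oplus$, and its unary nodes are relabellings $\rho_{i\to j}$ or edge-additions $\eta_{i,j}$. For a node $t$ of $\tau$, write $U_t\subseteq V(G)$ for the set of leaves below $t$; this is exactly the vertex set of the labelled graph $H_t$ computed at $t$, equipped with a labelling $\lambda_t\colon U_t\to[k]$.

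First I would pick the cut. Descend from the root of $\tau$ (where $U_t=V(G)$), moving at a unary node to its unique child and at a $\oplus$-node to the child $t$ maximising $|U_t|$ (ties arbitrary), and stop at the first node $t$ with $|U_t|\le\tfrac{2n}{3}$; the descent terminates at such a node since a leaf, if reached, has $|U_t|=1\le\tfrac{2n}{3}$. The root is not this node, so its parent $t'$ exists and satisfies $|U_{t'}|>\tfrac{2n}{3}$; since a unary node does not change the number of leaves below it, $t'$ must be a $\oplus$-node, and as we descended to its larger child, $|U_t|\ge|U_{t'}|/2>\tfrac{n}{3}$. Hence $U:=U_t$ satisfies $\tfrac{n}{3}\le|U|\le\tfrac{2n}{3}$ and is an admissible index in the minimum defining $\mu(G)$. (For the handful of tiny graphs where no admissible $U$ exists the statement is vacuous or trivial, and our application only needs large $n$.)

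The core step is to prove $\mu_G(U)\le k$ by establishing the following invariant along the path $t=t_0,t_1,\dots,t_m=\mathrm{root}$ from $t$ to the root of $\tau$: whenever $u,u'\in U$ satisfy $\lambda_{t_0}(u)=\lambda_{t_0}(u')$, then for every $j$ the vertices $u$ and $u'$ have the same label in $H_{t_j}$ and the same neighbourhood outside $U$ in $H_{t_j}$, i.e.\ $N_{H_{t_j}}(u)\setminus U=N_{H_{t_j}}(u')\setminus U$. The base case $j=0$ is immediate since $H_{t_0}$ has vertex set $U$. For the step, inspect the operation labelling $t_{j+1}$, which takes $H_{t_j}$ as one of its arguments: a disjoint union adds no edge incident to $U$ and leaves labels on $U$ untouched; a relabelling $\rho_{i\to j'}$ adds no edges and, being a function of labels, keeps $u,u'$ sharing a label; and for $\eta_{i,j'}$, since $u$ and $u'$ share a label in $H_{t_j}$, they both lie in class $i$, both in class $j'$, or both in neither, so each of them acquires exactly the same set of new neighbours outside $U$ --- all current label-$j'$ (resp.\ label-$i$) vertices outside $U$, or none --- while labels are unchanged. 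Evaluating the invariant at $t_m$, where $H_{t_m}=G$, shows that any two vertices of $U$ sharing a label at $t_0$ are $U$-similar. Therefore $\mu_G(U)$ is at most the number of distinct labels assigned to vertices of $U$ at $t_0$, which is at most $k$, and so $\mu(G)\le\mu_G(U)\le k=\cw(G)$.

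I expect the main obstacle to be the bookkeeping in the inductive step, especially the $\eta_{i,j'}$ case, where one must check that two $U$-vertices with a common current label receive identical new edges into $V(G)\setminus U$ (and that edges internal to $U$, which are irrelevant to $U$-similarity, may be ignored); locating the balanced node and handling the $\oplus$ and $\rho$ cases are routine.
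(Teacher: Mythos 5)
Your argument is correct: the descent in the parse tree does produce a node $t$ with $\tfrac{n}{3} < |U_t| \le \tfrac{2n}{3}$, and the invariant that two $U$-vertices sharing a label at $t$ keep equal labels and equal neighbourhoods outside $U$ at every ancestor (including under $\eta_{i,j'}$, where both acquire exactly the label-$j'$, resp.\ label-$i$, vertices outside $U$) correctly yields $\mu_G(U)\le k$. Note that the paper does not prove this lemma itself but imports it from the cited reference, so there is no in-paper proof to compare with; your write-up is the standard parse-tree argument for such lower bounds, and the only loose end is the degenerate case $n\le 2$ where no balanced $U$ exists, which you rightly flag as irrelevant to the application.
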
  

We will apply Lemma~\ref{prop:ndcut} to bound below the clique-width of the graphs $G_\varphi(n)$. First, we observe that the vertices in $G_\varphi(n)$ have degree at most 7, which allows us to prove
the following auxiliary lemma.

\begin{lemma} \label{lem:edge-bound}
	Let $G = G_\varphi(n)$, and $U \subseteq V(G)$. We have $\frac{e_G(U)}{49} \leq \mu_G(U) \leq e_G(U) + 1$. 
\end{lemma}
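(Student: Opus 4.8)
The plan is to exploit the fact, noted just before the statement, that $G = G_\varphi(n)$ has maximum degree at most $7$: every vertex of the balanced binary tree $T_2(n)$ has at most two children and one parent (degree $\le 3$), every vertex of the balanced ternary tree $T_3(n)$ has at most three children and one parent (degree $\le 4$), and these bounds are permutation-invariant, so $\Delta(G) \le 7$. Both inequalities in the lemma will follow from this degree bound together with an elementary count of the crossing edges of the $U$-cut, organised according to the $U$-similarity classes. Throughout I use that $e_G(U) = \sum_{x \in U} |N_G(x)\setminus U|$, since each crossing edge has exactly one endpoint in $U$.

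For the upper bound $\mu_G(U) \le e_G(U) + 1$, I would first observe that whether a vertex $x \in U$ is incident to a crossing edge depends only on its $U$-similarity class, as it is equivalent to $N_G(x) \setminus U \ne \emptyset$. Hence at most one similarity class --- the one whose common external neighbourhood is $\emptyset$ --- consists of vertices with no incident crossing edge; every other class consists entirely of endpoints, inside $U$, of crossing edges. Since there are only $e_G(U)$ crossing edges and each contributes a single endpoint in $U$, there are at most $e_G(U)$ such vertices in total, hence at most $e_G(U)$ such classes, giving $\mu_G(U) \le e_G(U) + 1$.

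For the lower bound $e_G(U)/49 \le \mu_G(U)$, I would partition $U$ into its similarity classes $C_1, \dots, C_m$ with $m = \mu_G(U)$, write $S_i$ for the common external neighbourhood of the vertices of $C_i$, and note that each vertex of $C_i$ contributes exactly $|S_i|$ crossing edges, so $e_G(U) = \sum_{i=1}^m |C_i|\,|S_i|$. A summand with $S_i = \emptyset$ vanishes; for $S_i \ne \emptyset$, fixing any $y \in S_i$ forces $C_i \subseteq N_G(y)$, so $|C_i| \le \deg_G(y) \le 7$, while also $|S_i| \le \deg_G(x) \le 7$ for any $x \in C_i$ --- hence $|C_i|\,|S_i| \le 49$. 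Summing over the (at most $m$) nonzero terms gives $e_G(U) \le 49\,\mu_G(U)$, which is the claim.

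This is all fairly routine and I do not expect a genuine obstacle. The only two points needing a moment's care are the dichotomy ``within a similarity class, either all or none of the vertices meet a crossing edge'', used to isolate the single exceptional class in the upper bound, and the observation that a non-empty common external neighbourhood $S_i$ forces the class $C_i$ to be small via the degree bound, which is the crux of the lower bound.
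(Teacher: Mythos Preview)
Your proof is correct and follows essentially the same approach as the paper: both bounds rest on the observation $\Delta(G)\le 7$, used once for the upper bound and twice for the lower. Your lower-bound count via the exact formula $e_G(U)=\sum_i |C_i|\,|S_i|$ is a slightly cleaner packaging of the paper's argument, which instead notes that at least $e_G(U)/7$ vertices of $\overline{U}$ are touched by crossing edges and that these are covered by the representatives' external neighbourhoods, each of size at most~$7$.
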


\begin{proof}
	Let $x_1, \dots, x_{\mu_G(U)} \in U$ be representatives of the $U$-similar equivalence classes. Since these representatives have pairwise different neighbourhoods in $\overline{U}$, at most one of them has no neighbours in $\overline{U}$. Therefore there are at least $\mu_G(U) - 1$ edges between $U$ and $\overline{U}$, from which $\mu_G(U) \leq e_G(U) + 1$.
	
	On the other hand, note that, since degree in $G$ is bounded above by 7, there must be at least $\frac{e_G(U)}{7}$ vertices in $\overline{U}$ that are incident with at least one edge between $U$ and $\overline{U}$. 
	For the same reason, each of the representatives has at most 7 neighbours in $\overline{U}$.
	This implies that $\mu_G(U) \geq \frac{1}{7} \cdot \frac{e_G(U)}{7} = \frac{e_G(U)}{49}$,
	as claimed.
\end{proof}

Let $T_b := T_b(n)$ for $b = 2, 3$, and let $G = G_\varphi(n)$ for some $\varphi \in S_n$.
Any $U$-cut  of $G$ induces the $\varphi^{-1}(U)$-cut in $T_2$ and the $U$-cut in $T_3$.
Clearly, $\max\{ e_{T_2}(\varphi^{-1}(U)), e_{T_3}(U) \} \leq e_G(U)$.
Our aim is to bound below $e_G(U)$, for all balanced $U$-cuts of $G$. 	
At the heart of our argument lies the following idea: by allowing the $U$-cut in $G$ to have few crossing edges, we are putting restrictions on the ratio $\frac{|U|}{n}$. We show that the restrictions coming from the $\varphi^{-1}(U)$-cut in $T_2$ and those coming from the $U$-cut in $T_3$ cannot be simultaneously satisfied, provided $n$ is large enough. In particular, since the restriction will only depend on $|U|$, and since $|\varphi^{-1}(U)| = |U|$, the permutation $\varphi$ loses its importance. We will prove that, given a number $k$, there exists $n(k)$ such that any balanced $U$-cut of $G$ with $n \geq n(k)$ vertices induces at least $k$ crossing edges.

\begin{restatable}{theorem}{mainequiv}
	\label{thm:main-equiv}
	For any $k \geq 1$, there exists an integer $n(k)$ such that for any $n \geq n(k)$, any $\varphi \in S_n$, the graph $G_{\varphi}(n)$ has no balanced $U$-cut with at most $k$ crossing edges.
\end{restatable}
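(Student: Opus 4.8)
The plan is to combine Lemma~\ref{lm:treeCuts} with the subtree-size estimate of Lemma~\ref{lem:subtree-size}, using a counting argument on the possible values of $|U|/n$. First I would set up the framework: fix $k$, suppose for contradiction that for arbitrarily large $n$ there is a permutation $\varphi$ and a balanced $U$-cut of $G_\varphi(n)$ with $e_G(U) \le k$; then both $e_{T_2}(\varphi^{-1}(U)) \le k$ and $e_{T_3}(U) \le k$. Write $m := |U| = |\varphi^{-1}(U)|$, so $n/3 \le m \le 2n/3$. The key point is that the constraint imposed by each tree-cut, via Lemma~\ref{lm:treeCuts}, forces $m/n$ to lie very close to a number expressible as a short signed sum of terms of the form $n_{v_i}/n$; and by Lemma~\ref{lem:subtree-size}, $n_{v_i}/n$ is within $O(1/n)$ of $b^{-\lev(v_i)}$ (up to a lower-order additive term $((b^{\lev(v_i)}-1)/(b-1))/n$ which is itself $O(b^{-\lev(v_i)})$-controlled after dividing by $n$, or more carefully, vanishes as $n\to\infty$ for fixed level). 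So in the limit $n \to \infty$, $m/n$ must be within error $o(1)$ of a number of the form $\mathbbm 1_U(r) + \sum_{i=1}^{k} \varepsilon_i \, b^{-\ell_i}$ with $\varepsilon_i \in \{-1,+1\}$ and $\ell_i \ge 1$, for $b = 2$ from the $T_2$-cut, and simultaneously within $o(1)$ of such a number for $b = 3$ from the $T_3$-cut.

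Next I would argue that these two requirements are incompatible. The set $D_2$ of all reals of the form $c + \sum_{i=1}^{k} \varepsilon_i 2^{-\ell_i}$ with $c \in \{0,1\}$, $\ell_i \ge 1$, $\varepsilon_i = \pm 1$ is a \emph{finite} set of dyadic rationals (at most $2\cdot(\text{finitely many})$ of them once we cap $\ell_i$ — but note $\ell_i$ is unbounded, so strictly speaking $D_2$ is countable; the right way is to observe that the values of $n_{v_i}/n$ that are not negligible must come from levels $\ell_i$ with $b^{-\ell_i}$ not too small, because a term with $b^{-\ell_i} < 1/(kn)\cdot n$-ish is swallowed by the error — more precisely, I would split the sum into ``large'' terms with $\lev(v_i) \le L$ for a threshold $L = L(k)$ and ``small'' terms, bound the total contribution of small terms by $k \cdot b^{-L}$, choose $L$ so this is $< 1/(100k)$ say). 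After this truncation, $m/n$ is within $1/(50k) + o(1)$ of the \emph{finite} set $D_b(L) := \{\,c + \sum \varepsilon_i b^{-\ell_i} : c\in\{0,1\},\ 1 \le \ell_i \le L,\ |\{i\}| \le k\,\}$. The numbers in $D_2(L)$ are rationals with denominator dividing $2^L$; those in $D_3(L')$ have denominator dividing $3^{L'}$. The only rationals common to (small neighbourhoods of) both sets within the interval $[1/3, 2/3]$ are rationals whose denominator divides $\gcd(2^L, 3^{L'}) = 1$, i.e. integers — but $[1/3,2/3]$ contains no integer. Hence for $n$ large enough the two neighbourhoods are disjoint, so no such $U$ exists, giving the contradiction. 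The threshold $n(k)$ is then extracted by making the $o(1)$ error terms (which are $O(k/n)$ plus the $b^{-L}$ truncation error, all explicitly bounded) smaller than half the minimum gap between $D_2(L)$ and $D_3(L')$ inside $[1/3,2/3]$.

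The step I expect to be the main obstacle is making the ``negligible small terms'' argument fully rigorous: one must ensure that the sum $\sum_{i : \lev(v_i) > L} \varepsilon_i n_{v_i}/n$ is genuinely small. Since there are at most $k$ edges in the cut-set in total, there are at most $k$ such terms, and each $n_{v_i}$ with $\lev(v_i) > L$ satisfies $n_{v_i} \le \lceil n / b^{L} \rceil + 1$ by Lemma~\ref{lem:subtree-size} (the subtree rooted at a vertex of level $> L$ has at most roughly $n/b^{L+1}$ vertices, plus lower-order terms), so the total small-term contribution is at most $k(b^{-L} + O(1/n))$, which we control by choosing $L$ first (depending only on $k$ and $b$) and then $n$ large. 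A secondary subtlety is the additive term $((b^{\lev(v)}-1)/(b-1))/n$ in Lemma~\ref{lem:subtree-size}: for levels $\le L$ this is $O(b^L/n) = O_L(1/n)$, hence absorbed into the $o(1)$; for levels $> L$ it is dominated by the already-negligible $n_{v_i}/n$ bound anyway. Once the bookkeeping is arranged so that, for $n \ge n(k)$, the ratio $m/n$ would have to be simultaneously within $\delta$ of a denominator-$2^L$ rational and within $\delta$ of a denominator-$3^{L'}$ rational in $[1/3,2/3]$ with $\delta$ less than half the (positive, finite) minimum such cross-distance, the incompatibility — and thus Theorem~\ref{thm:main-equiv} — follows.
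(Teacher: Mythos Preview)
Your overall strategy is right, and you have correctly identified the key lemmas (Lemmas~\ref{lm:treeCuts} and~\ref{lem:subtree-size}) and the core incompatibility: $m/n$ must be close to a dyadic rational from the $T_2$-cut and to a triadic rational from the $T_3$-cut, yet $[1/3,2/3]$ contains no number that is both. However, there is a genuine gap in your truncation step.

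The problem is that a threshold $L = L(k)$ chosen in advance controls two quantities that are coupled the wrong way. The truncation error from the small terms is at most $k \cdot b^{-L}$; you make this $< 1/(100k)$ by taking $b^L > 100k^2$. But the large terms then produce a rational in $D_b(L)$ with denominator $b^L$, and the minimum gap $d\bigl(D_2(L), D_3(L')\bigr)$, while positive, is at most of order $\min(2^{-L}, 3^{-L'})$ (every real is within that distance of a $b$-adic rational at that resolution). With $L,L'$ chosen so that both truncation errors are below $1/(100k)$, the gap is forced down to order $1/(100k^2)$ or smaller. Your total error, of order $1/(100k)$, therefore exceeds the gap, and no contradiction results. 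Increasing $L$ does not help: it shrinks the error and the gap at the same rate.

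The paper's proof resolves this by decoupling the denominator level from the truncation level. Instead of fixing $L$ beforehand, it applies a pigeonhole argument (Lemma~\ref{lem:big-gap}) to the at most $2k$ layer indices occupied by crossing edges in the two trees, producing a level $c$ with $\log_3(2k) < c$ such that in \emph{both} trees the layers $c+1,\dots,5c$ contain no crossing edges. Then (Lemma~\ref{lem:estimate1}) the large terms have level $\le c$, placing $r(U)$ near $R_{b,c}$ with denominator $b^c$, while the small terms have level $> 5c$, so contribute at most $k \cdot b^{-(5c+1)}$. Since $d(R_{2,c},R_{3,c}) \ge 3^{-2c}$ (Lemma~\ref{lem:distance-between-sets}) and $k \cdot 2^{-5c} < (3^c/2)\cdot 3^{-3c} = 3^{-2c}/2$ once $c > \log_3(2k)$, the contradiction goes through. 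The missing idea in your sketch is precisely this gap-finding step: you need the small terms to live at levels far beyond $L$ (multiplicatively, not additively), and that cannot be guaranteed for a threshold depending only on $k$; it must be chosen adaptively from the positions of the cut edges.
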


Theorem \ref{thm:main-equiv} implies Theorem~\ref{thm:cw}.
Indeed, Theorem~\ref{thm:main-equiv} says that for $n$ large enough any balanced $U$-cut in $G$ has at least $k$ crossing edges. This bounds below by $k$ the parameter $e(G) := \min\limits_{\frac{n}{3} \leq |U| \leq \frac{2n}{3}} e_G(U)$, and then Lemma~\ref{lem:edge-bound} and Lemma~\ref{prop:ndcut} imply Theorem~\ref{thm:cw}.

\smallskip

Let us sketch how we prove Theorem~\ref{thm:main-equiv}. 
For a set $U \subseteq V(G)$ we will denote by $r(U)$ the ratio $\frac{|U|}{n}$.	
We aim to derive estimates for $r(U)$ using the $U$-cut-sets in each of the two trees $T_2$ and $T_3$, and show that the two estimates cannot agree for sufficiently large $n$. To summarise the intuition behind our argument, let us, for a moment, imagine that the ratio $n_v/n$ for every vertex $v$ at level $i$ in $T_b$ is exactly $b^{-i}$. Then, from Lemma \ref{lm:treeCuts}, we could derive that $r(U)$ belongs to the set $[\frac{1}{3}, \frac{2}{3}] \cap \{\frac{a}{b^c} : a \in \mathbb N\}$ for an appropriate choice of $c$. Since two such sets for $b = 2$ and $b = 3$ respectively have empty intersection (as we will show in Lemma \ref{lem:distance-between-sets}), this would produce a contradiction. Of course, our assumption about the ratio $n_v/n$ is not true. However, using Lemma~\ref{lem:subtree-size} and a carefully prepared set-up, we are able to show that $r(U)$ must simultaneously be close to the two sets $[\frac{1}{3}, \frac{2}{3}] \cap \{\frac{a}{2^c} : a \in \mathbb N\}$ and
$[\frac{1}{3}, \frac{2}{3}] \cap \{\frac{a}{3^c} : a \in \mathbb N\}$ -- close enough to still yield the desired contradiction.

In what follows, we write $R_{b, c}$ for the set of \emph{$b$-adic rationals} between $\frac{1}{3}$ and $\frac{2}{3}$ with denominator at most $b^c$, i.e., $R_{b, c} := [\frac{1}{3}, \frac{2}{3}] \cap \{\frac{a}{b^c} : a \in \mathbb Z\}$.
Let $r \in \mathbb R$, and $X, Y \subseteq \mathbb R$. We denote by $d(X, Y)$ the infimum of the distances between a point $x \in X$ and a point $y \in Y$. When an argument of $d(\cdot, \cdot)$
consists of single point, we will omit $\{ \}$ in the notation. We denote by $\mathbb B(X, r)$ the set of points that are distance less than $r$ from some point in $X$.
Of central importance in our argument is a simple, yet useful lower bound on the distance between the sets $R_{2, i}, R_{3, j}$: 

\begin{lemma} \label{lem:distance-between-sets}
	$d(R_{2, i}, R_{3, j}) \geq 3^{-(i + j)}$. 
\end{lemma}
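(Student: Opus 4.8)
The plan is to reduce the whole statement to one elementary fact about integers. Take an arbitrary $x \in R_{2,i}$ and $y \in R_{3,j}$; by definition $x = a/2^i$ and $y = a'/3^j$ for some $a, a' \in \mathbb{Z}$. I would then write the difference over the common denominator $2^i 3^j$, obtaining
$x - y = (3^j a - 2^i a')/(2^i 3^j)$, whose numerator is an integer. Hence, as soon as I know this numerator is nonzero, I get $|x - y| \ge 1/(2^i 3^j) \ge 1/(3^i 3^j) = 3^{-(i+j)}$, using $2^i \le 3^i$. Taking the infimum over all admissible pairs $x, y$ then gives $d(R_{2,i}, R_{3,j}) \ge 3^{-(i+j)}$.

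So the only thing left to verify is that $x \ne y$ always, i.e.\ that $R_{2,i} \cap R_{3,j} = \emptyset$. For this I would argue: suppose $q$ lies in both sets and write $q = p/s$ in lowest terms. From the representation $q = a/2^i$ we get $s \mid 2^i$, and from $q = a'/3^j$ we get $s \mid 3^j$; since $\gcd(2^i, 3^j) = 1$ this forces $s = 1$, so $q \in \mathbb{Z}$. But $q \in [1/3, 2/3]$, which contains no integer — a contradiction. (This is exactly where the choice of the interval $[1/3, 2/3]$ is used.) Consequently every admissible $x, y$ differ, the numerator above is never zero, and the bound goes through.

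Finally I would dispose of the degenerate cases: for small $i$ or $j$ (for instance $i = 0$ or $j = 0$) one of $R_{2,i}$, $R_{3,j}$ may be empty, and then the claimed inequality holds vacuously, since $d(\cdot,\cdot)$ is an infimum over the empty set. I do not expect a genuine obstacle here: the entire content is the observation that a number which is simultaneously a $2$-adic and a $3$-adic rational must be an integer, after which coprimality of $2^i$ and $3^j$ and the absence of integers in $[1/3,2/3]$ do all the work.
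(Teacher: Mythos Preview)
Your argument is correct and essentially identical to the paper's: both show $R_{2,i}\cap R_{3,j}=\varnothing$ via the observation that a simultaneous dyadic and triadic rational must be an integer (hence outside $[\tfrac13,\tfrac23]$), then put everything over the common denominator $2^i3^j$ to get the bound $2^{-i}3^{-j}\ge 3^{-(i+j)}$. Your explicit mention of the degenerate empty-set case is a small addition the paper omits.
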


\begin{proof}
	First, note that $R_{2, i} \cap R_{3, j} = \varnothing$. Indeed, if $x = \frac{p}{2^i} = \frac{q}{3^j}$, then $p3^j = q2^i$; hence, since $2^i$ and $3^j$ are coprime, we have that $2^i$ divides $p$, which implies that $x$ is an integer. But $R_{2, i} \subseteq [\frac{1}{3}, \frac{2}{3}]$ by definition. 
	
	Now put all numbers in the two sets over the common denominator $2^i3^j$. Since they are all distinct, they must differ by at least $2^{-i}3^{-j} \geq 3^{-(i + j)}$ as claimed. 
\end{proof}

For a rooted tree $T$ and a natural number $c$, we define the \emph{layer $c$ of $T$} to be the set of edges of $T$ with one endpoint at level $c-1$ and the other endpoint at level $c$.
In the next lemma, we assume that we have a balanced $U$-cut in $T_b(n)$ with at most $k$ crossing edges, and fix a layer index $c$. We may then partition the $U$-cut-set into ``top'' edges that belong to the first $c$ layers, and the ``bottom'' edges which are all the other edges in the $U$-cut-set. Assuming there is a gap of $d$ layers between the top crossing edges and the bottom ones, and using Lemma~\ref{lm:treeCuts}, we estimate the distance from $r(U)$ to $R_{b, c}$. 

\begin{lemma} \label{lem:estimate1}
	Let $c, d, k, b \in \mathbb N$ and $b \geq 2$. Suppose that $T_b(n)$ has $\ell \geq c + d$ full levels, that $U$ is a balanced cut in $T_b(n)$ with at most $k$ crossing edges, and that 
	the layers $c+1, c+2, \ldots, c+d$ have no crossing edges. Then $r(U) \in \mathbb B\left(R_{b, c}, k \cdot \left(b^{-(c + d + 1)} + \frac{4}{n}\right)\right).$
\end{lemma}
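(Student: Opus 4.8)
The plan is to use Lemma~\ref{lm:treeCuts} to express $|U|$, and hence $r(U)$, in terms of the subtree sizes $n_{v_i}$ of the lower endpoints of the crossing edges, then replace each $n_{v_i}/n$ by the ``idealized'' value $b^{-\lev(v_i)}$ using Lemma~\ref{lem:subtree-size}, tracking the accumulated error. First I would split the $U$-cut-set into the \emph{top} edges $e_i$ with $\lev(v_i) \leq c$ and the \emph{bottom} edges with $\lev(v_i) \geq c+d+1$ (there are no crossing edges in layers $c+1,\dots,c+d$ by hypothesis, so every crossing edge is one or the other). Dividing the identity of Lemma~\ref{lm:treeCuts} by $n$ gives
$$
r(U) = \mathbbm{1}_U(r) + \sum_{\text{top } i} (-1)^{\mathbbm{1}_U(u_i)} \frac{n_{v_i}}{n} + \sum_{\text{bottom } i} (-1)^{\mathbbm{1}_U(u_i)} \frac{n_{v_i}}{n}.
$$

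Next I would handle the two sums separately. For a bottom edge $e_i$, since $\lev(v_i) \geq c+d+1$ and the tree has $\ell \geq c+d$ full levels (so level $\lev(v_i) - 1 \geq c+d$ is full and Lemma~\ref{lem:subtree-size} applies with a genuine bound), we get $n_{v_i} \leq b^{-(c+d+1)} n + O(1)$; more precisely $n_{v_i}/n \leq b^{-(c+d+1)} + (\text{small})/n$. Summing over at most $k$ bottom edges, the total bottom contribution has absolute value at most $k\big(b^{-(c+d+1)} + c_1/n\big)$ for a small constant $c_1$ (the ``$+1$'' error term in Lemma~\ref{lem:subtree-size}, plus the $(b^{\lev(v_i)}-1)/(b-1)$ correction, can be absorbed — this is where the $4/n$ in the statement comes from; I would check the constant is at most $4$). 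For a top edge $e_i$, Lemma~\ref{lem:subtree-size} gives $n_{v_i}/n = b^{-\lev(v_i)} \cdot (1 + o(1)) + \beta_i/n$ with $|\beta_i| \leq 1$ plus the lower-order $(b^{\lev(v_i)}-1)/((b-1)b^{\lev(v_i)})$ term; crucially $b^{-\lev(v_i)}$ is an element of $\{a/b^c : a \in \mathbb Z\}$ since $\lev(v_i) \leq c$. Hence $\mathbbm{1}_U(r) + \sum_{\text{top}} (-1)^{\mathbbm{1}_U(u_i)} b^{-\lev(v_i)}$ is a $b$-adic rational with denominator dividing $b^c$, and it lies within distance (bottom contribution) $+$ (top error terms) of $r(U)$.

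Finally I would collect the errors: the candidate point $\rho := \mathbbm{1}_U(r) + \sum_{\text{top}} (-1)^{\mathbbm{1}_U(u_i)} b^{-\lev(v_i)}$ satisfies $|\rho - r(U)| \leq k\big(b^{-(c+d+1)} + 4/n\big)$ once all the $\beta_i/n$ and lower-order terms (at most $k$ of them, each bounded by a constant over $n$) are bundled into the $4/n$. It remains to argue $\rho \in R_{b,c}$, i.e.\ that $\rho$ actually lies in $[\tfrac13,\tfrac23]$: this follows because $r(U) \in [\tfrac13,\tfrac23]$ (the cut is balanced) and $\rho$ is within $o(1)$ of it — strictly speaking $\rho$ is a $b$-adic rational close to a point of $[\tfrac13,\tfrac23]$, and one should note that if $\rho$ were the nearest such rational outside the interval it would still serve, but cleaner is to observe $R_{b,c}$ is defined with $\mathbb Z$ numerators over $b^c$ intersected with $[\tfrac13,\tfrac23]$, so I would instead phrase the conclusion as $d(r(U), R_{b,c}) \leq |\rho - r(U)| + d(\rho, R_{b,c})$ and note $d(\rho, R_{b,c})$ is controlled by how far $\rho$ pokes out of $[\tfrac13,\tfrac23]$, which is at most the error itself. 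The main obstacle is purely bookkeeping: pinning down that the sum of all the $O(1/n)$ error terms — there are up to $k$ of them, coming from the $|\beta| \leq 1$ slack and from the $(b^{\lev(v)}-1)/(b-1)$ offsets in Lemma~\ref{lem:subtree-size} — is genuinely at most $k \cdot 4/n$ and not something larger, and making sure the hypothesis $\ell \geq c+d$ is exactly what is needed for Lemma~\ref{lem:subtree-size} to apply with a real (not vacuous) error bound at every edge we use it on.
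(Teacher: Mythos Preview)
Your proposal is correct and follows essentially the same approach as the paper: split the cut-set into top edges (layers $\le c$) and bottom edges (layers $\ge c+d+1$), use Lemma~\ref{lm:treeCuts} and Lemma~\ref{lem:subtree-size} to show the top part is within $2k/n$ of a point of $\{a/b^c : a\in\mathbb Z\}$ and the bottom part has absolute value at most $k(b^{-(c+d+1)}+2/n)$, then combine by the triangle inequality to get the $4/n$. Your concern about whether the nearby $b$-adic rational actually lies in $[\tfrac13,\tfrac23]$ is legitimate but is glossed over in the paper as well (and is immaterial for the downstream application, since Lemma~\ref{lem:distance-between-sets} really only needs the points to be non-integers); also note that Lemma~\ref{lem:subtree-size} applies unconditionally to every vertex of a balanced tree, so the hypothesis $\ell \ge c+d$ is not what makes that lemma ``genuine'' --- it simply guarantees the layers $c+1,\dots,c+d$ exist.
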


\begin{proof}
	Let $e_1, \dots, e_{k'}$ denote the edges of the $U$-cut-set, where $k' \leq k$, and let $e_i = (u_i, v_i)$ for $i \in [k']$.  Assume, without loss of generality, that $e_1, \dots, e_s$ belong to the first $c$ layers, while $e_{s + 1}, \dots, e_{k'}$ are in layers with indices at least $c + d + 1$. 
	By Lemma~\ref{lm:treeCuts}, we have $$|U| = \mathbbm{1}_{U}(r) \cdot n + \sum_{i=1}^{k'} (-1)^{\mathbbm{1}_{U}(u_i)} \cdot n_{v_i}.$$
	
	\noindent
	We split this sum into two terms 
	
	$$
		S_1 := \mathbbm{1}_{U}(r) \cdot n + \sum_{i=1}^s (-1)^{\mathbbm{1}_{U}(u_i)} \cdot n_{v_i}
		~~~~
		\text{ and}
		~~~~
		S_2 :=  \sum_{i=s + 1}^{k'} (-1)^{\mathbbm{1}_{U}(u_i)} \cdot n_{v_i},
	$$
	and write $r_i = \frac{S_i}{n}$ for $i \in \{1,2\}$, so that $r(U) = r_1 + r_2$.
	To get an upper bound for $d(R_{b, c}, r(U))$, we will estimate $d(R_{b, c}, r_1)$ and $d(r_1, r(U))$ separately, using Lemma~\ref{lem:subtree-size}, and then apply the triangle inequality. We have
	
	$$
		r_1 = \alpha_0 + \sum_{i=1}^s \alpha_i \frac{n_{v_i}}{n},
	$$ 	
	where $\alpha_i \in \{0, \pm 1\}$. Directly from Lemma~\ref{lem:subtree-size},
	
	$$
		\frac{n_{v_i}}{n} = \frac{1}{b^{\lev(v_i)}} - \frac{b^{\lev(v_i)} - 1}{n(b - 1)b^{\lev(v_i)}} + \varepsilon,
	$$ 
	where $|\varepsilon| \leq \frac{1}{n}$. In particular, the sum of the last two terms is smaller in absolute value than $\frac{2}{n}$. Since $r_1$ is a sum of $\alpha_0 \in \{\frac{a}{b^c} : a \in \mathbb Z\}$ and at most $k$ terms each within $\frac{2}{n}$ from a number of the form $\frac{1}{b^{\lev(v)}} = \frac{b^{c - \lev(v)}}{b^c} \in \{\frac{a}{b^c} : a \in \mathbb Z\}$, we get 
	$$
		d(R_{b, c}, r_1) < \frac{2k}{n}.
	$$
	
	To estimate $d(r_1, r(U))$, we observe that it is equal to $|r_2|$, and note that $S_2$ is a sum of the values $n_v$ for some vertices $v$ lying at levels below $c + d$. Hence, another application of Lemma~\ref{lem:subtree-size} and a similar calculation as the one above then give
	$$
		|r_2| < k \cdot \left(\frac{1}{b^{c + d + 1}} + \frac{2}{n}\right),
	$$ 
	and an application of the triangle inequality finishes the proof of the claim.
	
\end{proof}

Before we proceed to the proof of Theorem \ref{thm:main-equiv} we need a small auxiliary lemma.

\begin{lemma} \label{lem:big-gap}
	Let $A, M, K \in \mathbb{N}^+$ and $M \geq 2$. Suppose $L > A \cdot M^{K + 2}$, and assume that at most $K$ elements of $[L]$ are coloured red and the rest are black. Then there exists $C$ with $A < C < L/M$ such that the $MC$ elements after it are all coloured black. 
\end{lemma}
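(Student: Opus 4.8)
The plan is to prove Lemma~\ref{lem:big-gap} by a counting/pigeonhole argument. We want to find a ``long black run'' in $[L]$ located not too far to the left (so that $C > A$) and not too far to the right (so that $C < L/M$). The natural approach is to restrict attention to the window $(A, L/M)$, chop it into consecutive blocks each of length $MC$ for a suitable choice of $C$, and argue that at least one block must be entirely black because the red elements are too few to hit every block.

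More concretely, first I would fix a candidate value: take $C := \lfloor L/M^{K+2}\rfloor$ (or something of that order), so that by the hypothesis $L > A\cdot M^{K+2}$ we get $C > A$, and also $MC \le L/M^{K+1}$, which is tiny compared to $L/M$. Then consider the interval $I := (A, L/M]$, whose length is at least $L/M - A \ge L/M - C > L/(2M)$ for $L$ large (one should check this more carefully with the exact bounds, using $A < C$ and $C \le L/M^{K+2}$ so that $A$ is negligible compared to $L/M$). Partition $I$ into disjoint consecutive blocks of length $MC$ each; the number of such blocks is at least $\lfloor |I|/(MC)\rfloor$, and since $MC \le L/M^{K+1}$ while $|I| \ge L/(2M)$, this count is at least roughly $M^{K}/2 \ge K+1$ for $M \ge 2$. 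Now there are at most $K$ red elements, so by pigeonhole at least one block, say starting right after position $C'$, contains no red element, i.e. is entirely black. Setting $C = C'$ — wait, one must be a little careful that the \emph{starting point} of the block, not $C$ itself, is what plays the role of the ``$C$'' in the statement; so I would instead phrase it as: there is a position $C^\star$ with $A \le C^\star$ and $C^\star + MC \le L/M$ such that positions $C^\star+1, \ldots, C^\star + MC$ are all black, and then note $MC^\star \ge MC \ge \dots$ — hmm, actually the statement asks for the $MC$ elements after $C$ to be black where $C$ is the same variable, so I should choose the block length to be $M$ times the block's starting coordinate.

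This last point is the real subtlety, and I expect it to be the main obstacle: the statement is self-referential in that the length of the required black run ($MC$) depends on where the run starts ($C$). The clean fix is to choose the blocks to have \emph{geometrically growing} lengths rather than constant length. Concretely, consider the nested intervals with right endpoints $A, AM, AM^2, \ldots, AM^{K+1}$; note $AM^{K+1} < L/M$ by hypothesis. For each $i \in \{0,1,\ldots,K\}$, the segment $(AM^i, AM^{i+1}]$ has length $AM^{i+1} - AM^i = AM^i(M-1) \ge AM^i \ge M\cdot AM^{i-1}$... — more to the point, if we set $C_i := AM^i$ then the segment $(C_i, C_i M]$ has exactly the form ``the $MC_i - C_i$ elements after $C_i$'', which isn't quite $MC_i$ either. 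The genuinely correct choice: let $C_i := AM^i$ and look at the $MC_i$ elements $C_i+1, \ldots, C_i + MC_i = C_i(M+1)$; these segments for $i = 0, \ldots, K$ are pairwise disjoint provided $C_i(M+1) \le C_{i+1} = C_i M$, which fails. So instead use base $M+1$: set $C_i := A(M+1)^i$, and the segments $(C_i, C_i(M+1)]$ are then consecutive and disjoint, all contained in $(A, A(M+1)^{K+1}]$. Finally, relate $A(M+1)^{K+1}$ to $L/M$: since $M+1 \le M^2$ (for $M\ge 2$... actually $M+1 \le M^2$ holds for $M \ge 2$), we get $A(M+1)^{K+1} \le A M^{2(K+1)}$, which is somewhat larger than $A M^{K+2}$, so I would strengthen the hypothesis bookkeeping or, better, keep base $M$ and simply take $MC$-length segments starting at $C_i = A M^i \cdot(\text{something})$ — I will settle the exact constants in the writeup. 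With $K+1$ disjoint segments and only $K$ red elements, one segment $(C_i, C_i + MC_i]$ is all black; set $C := C_i$, check $A < C_i < L/M$, and conclude. The routine part is verifying the inequalities $A < C$ and $C < L/M$ from $L > A M^{K+2}$; the conceptual part is the geometric-blocks idea that makes the self-referential length condition work.
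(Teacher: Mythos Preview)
Your geometric-interval-plus-pigeonhole approach is exactly the paper's. The paper sets $X_i := \{AM^i+1,\ldots,AM^{i+1}\}$ for $0 \le i \le K+1$; one of $X_1,\ldots,X_{K+1}$ is entirely black by pigeonhole, and if $X_j$ is the first such, the paper takes $C := AM^j$ (the right endpoint of $X_{j-1}$). This is precisely your ``$C_i = AM^i$ with blocks $(C_i, MC_i]$'' attempt, and the checks $A < C$ and $C < L/M$ go through directly from $L > AM^{K+2}$ just as you indicated.

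Your concern that this construction certifies only the $(M-1)C$ elements $C+1,\ldots,MC$ rather than the $MC$ elements $C+1,\ldots,(M+1)C$ is sharp: the paper's proof as written establishes exactly the former, and your base-$(M{+}1)$ fix would indeed require a stronger hypothesis on $L$ than the one stated. This is a genuine wrinkle in the lemma's statement versus its proof, not a gap in your reasoning; for the downstream application the slack can be absorbed by enlarging $M$. So you should commit to the base-$M$ construction rather than continue agonising over constants --- your final outline ($K+1$ disjoint geometric blocks, pigeonhole, then verify $A<C<L/M$) is the entire argument, and there is nothing further in the paper's proof.
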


\begin{proof}
	For $0 \leq i \leq K + 1$,
	write $X_i := \{A \cdot M^i + 1, \dots, A \cdot M^{i + 1}\}$. At least one of the intervals $X_1, \dots, X_{K + 1}$ has no red elements. Let $j \geq 1$ be the smallest index such that $X_{j}$ has no red elements. Let $C$ be the largest element in $X_{j - 1}$. Then $C$ satisfies the statement of the lemma.
\end{proof}

We are now ready to prove Theorem \ref{thm:main-equiv}.

\mainequiv*
\begin{proof}
	Let $k \geq 1$ and $\ell > \lceil\log_3 2k\rceil \cdot 4^{2k+2}$. 
	We define $n(k) = 3^{5\ell/4+1}$ and let $n \geq n(k)$. 
	We fix an arbitrary $\varphi \in S_n$ and let $G = G_{\varphi}(n)$.
	Observe that the choice of $n$ guarantees that all levels up to and including level $5\ell/4$ in both $T_2(n)$ and $T_3(n)$ are filled.
	
	Assume that there exists a vertex set $U \subseteq V(G)$ such that the 
	$U$-cut is balanced and has at most $k$ crossing edges.
	Notice that the $U$-cut-set in $G$ induces the $\varphi^{-1}(U)$-cut-set in $T_2(n)$ and the $U$-cut-set in $T_3(n)$, each with at most $k$ edges.
	An edge $e$ in the $U$-cut-set of $G$ comes either from $T_2(n)$ (if $\varphi^{-1}(e)$ is an edge in $T_2(n)$), or from $T_3(n)$ (if $e$ is an edge in $T_3(n)$), or from both. Let $R(e)$ be the set
	that consists of the index of the layer of $T_2(n)$ that contains $\varphi^{-1}(e)$, if $\varphi^{-1}(e)$ is an edge in $T_2(n)$, and the index of the layer of $T_3(n)$ that contains $e$,
	if $e$ is and edge in $T_3(n)$. 
	Let $R = \bigcup R(e)$, where the union is over all edges $e$ in the $U$-cut-set of $G$.
	Clearly $R$ has at most $2k$ elements. By coloring red the numbers in $R \cap [\ell]$
	and black the numbers in $[\ell] \setminus R$ and applying Lemma \ref{lem:big-gap}
	with $A = \lceil\log_3 2k\rceil$, $M = 4$, and $K = 2k$,  we conclude that there is a $c$
	with $\lceil\log_3 2k\rceil < c < \ell/4$ such that
	the layers $c+1, c+2, \ldots, 5c$ of $T_2(n)$ contain no $\varphi^{-1}(U)$-cut-set edges, and
	the layers $c+1, c+2, \ldots, 5c$ of $T_3(n)$ contain no $U$-cut-set edges.
	
	Since $|\varphi^{-1}(U)| = |U|$ and hence $r(\varphi^{-1}(U)) = r(U)$,
	by applying Lemma \ref{lem:estimate1} to $T_2(n)$ and $T_3(n)$, we derive that 
	on the one hand
	$r(U) \in \mathbb B\left(R_{2, c}, k \cdot \left(2^{-(5c + 1)} + \frac{4}{n}\right)\right)$,
	and on the other hand
	$r(U) \in \mathbb B\left(R_{3, c}, k \cdot \left(3^{-(5c + 1)} + \frac{4}{n}\right)\right)$.
	Therefore, by the triangle inequality, we obtain an upper bound on the distance between $R_{2, c}$ and $R_{3, c}$: 
	$$
	d(R_{2, c}, R_{3, c}) \leq k \cdot \left(2^{-(5c + 1)} + \frac{4}{n}\right) + k \cdot \left(3^{-(5c + 1)} + \frac{4}{n}\right) < k \cdot 2^{-5c} + \frac{8k}{n}.
	$$ 
	
	\noindent
	Next, we will show that each of the latter two summands is smaller than $3^{-2c}/2$. 
	This will imply a contradiction to Lemma \ref{lem:distance-between-sets} and thus prove the theorem.
	We start with the first summand. Since $c > \log_3{2k}$, we have that $k < 3^c/2$, and therefore
	$$
	k \cdot 2^{-5c} < \frac{3^c \cdot 2^{-5c}}{2}  <  \frac{3^c \cdot 3^{-3c}}{2} = \frac{3^{-2c}}{2}.
	$$
	
	\noindent
	To bound the second summand, we recall that $n \geq n(k) = 3^{5\ell/4+1} > 3^{5\ell/4}$ and $\log_3 2k < c < \ell/4$.
	The latter implies that $k < \frac{3^{\ell/4}}{2}$, and therefore we have
	$$
		\frac{8k}{n} < \frac{4 \cdot 3^{\ell/4}}{3^{5\ell/4}} = \frac{4}{3^{\ell}} < \frac{4}{3^{4c}} 
		= \frac{4}{3^{2c}} \cdot 3^{-2c} < \frac{3^{-2c}}{2},
	$$
	where the last inequality follows from the fact that $c > 1$, as  $\lceil\log_3 2k\rceil < c$ and $k \geq 1$.
\end{proof}

As discussed earlier, Theorem~\ref{thm:main-equiv} together with Lemma~\ref{lem:edge-bound} and Lemma~\ref{prop:ndcut} imply Theorem~\ref{thm:cw}. 
Now, since clique-width of any graph of treewidth $k$ is at most $3 \cdot 2^{k-1}$ \cite{CR05}, Theorem \ref{thm:cw} implies Theorem~\ref{thm:main}.

\section{Positive results: achieving boundedness} \label{sec:positive}

In this section, we show that any pair of $n$-vertex graphs such that one has treewidth at most $k$ and the other pathwidth at most $\ell$ can be glued together so that the resulting graph has treewidth at most $k+3\ell+1$. This implies that we can glue together a tree and a caterpillar to a graph of treewidth at most 5. After this, we study some more restrictive conditions which allow us to guarantee tighter bounds on the treewidth of unions.
We start with some definitions that we will require.

	
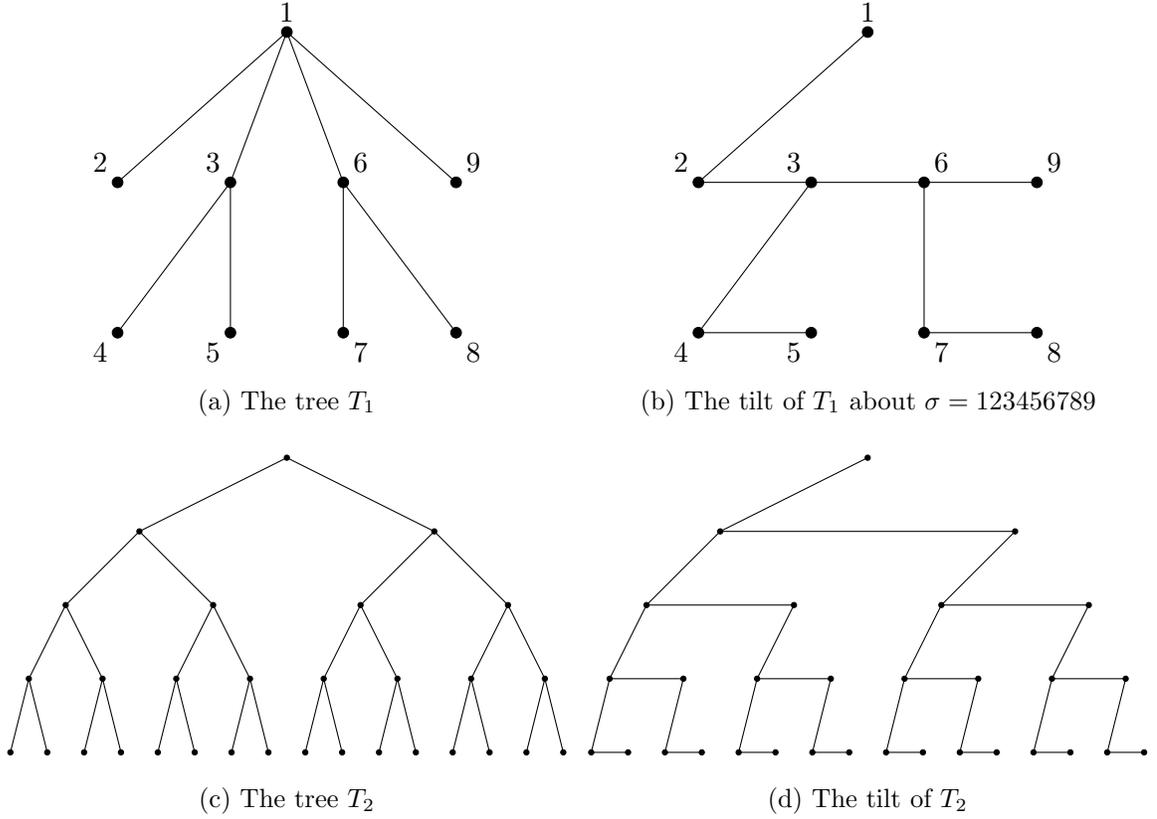
\begin{figure}[ht]
	\centering
	\begin{subfigure}[t]{0.49\linewidth}
		\centering
		\begin{tikzpicture}[scale=1, transform shape]
			
			\draw
			(0, 4) -- (-2.25,2)
			(0, 4) -- (-0.75,2)
			(0, 4) -- (0.75,2)
			(0, 4) -- (2.25,2)
			(-0.75,2) -- (-2.25,0)
			(-0.75,2) -- (-0.75,0)
			(0.75,2) -- (0.75,0)
			(0.75,2) -- (2.25,0);
			
			\filldraw 
			(0,4) circle (2pt) node[above] {$1$}
			(-2.25,2) circle (2pt) node[above left] {$2$}
			(-0.75,2) circle (2pt) node[above left] {$3$}
			(0.75,2) circle (2pt) node[above right] {$6$}
			(2.25,2) circle (2pt) node[above right] {$9$}
			(-2.25,0) circle (2pt) node[below left] {$4$}
			(-0.75,0) circle (2pt) node[below left] {$5$}
			(0.75,0) circle (2pt) node[below right] {$7$}
			(2.25,0) circle (2pt) node[below right] {$8$};

		\end{tikzpicture}
		\captionsetup{justification=centering}
		\caption{The tree $T_1$}
		\label{middlepoint}	
	\end{subfigure}	
	\begin{subfigure}[t]{0.49\linewidth}
		\centering
		\begin{tikzpicture}[scale=1, transform shape]
			
			\draw
			(0, 4) -- (-2.25,2)
			(-2.25, 2) -- (2.25,2)
			(-0.75,2) -- (-2.25,0)
			(-2.25,0) -- (-0.75,0)
			(0.75,2) -- (0.75,0)
			(0.75,0) -- (2.25,0);
			
			\filldraw 
			(0,4) circle (2pt) node[above] {$1$}
			(-2.25,2) circle (2pt) node[above left] {$2$}
			(-0.75,2) circle (2pt) node[above left] {$3$}
			(0.75,2) circle (2pt) node[above right] {$6$}
			(2.25,2) circle (2pt) node[above right] {$9$}
			(-2.25,0) circle (2pt) node[below left] {$4$}
			(-0.75,0) circle (2pt) node[below left] {$5$}
			(0.75,0) circle (2pt) node[below right] {$7$}
			(2.25,0) circle (2pt) node[below right] {$8$};

		\end{tikzpicture}
		\captionsetup{justification=centering}
		\caption{The tilt of $T_1$ about $\sigma = 123456789$}
		\label{middlepointareas}	
	\end{subfigure}
	
	\bigskip
	
	\begin{subfigure}[t]{0.49\linewidth}
		\centering
		\begin{tikzpicture}[scale=0.49, transform shape]
			
			\filldraw (0, 0) circle (2pt) node{};
			\draw (0, 0) -- (-4, -2);
			\draw (0, 0) -- (4, -2);
			
			\filldraw (-4, -2) circle (2pt) node{};
			\filldraw (4, -2) circle (2pt) node{};
			\draw (-4, -2) -- (-6, -4);
			\draw (-4, -2) -- (-2, -4);
			\draw (4, -2) -- (2, -4);
			\draw (4, -2) -- (6, -4);
			
			\filldraw (-6, -4) circle (2pt) node{};
			\filldraw (-2, -4) circle (2pt) node{};
			\filldraw (2, -4) circle (2pt) node{};
			\filldraw (6, -4) circle (2pt) node{};
			\draw (-7, -6) -- (-6, -4);
			\draw (-5, -6) -- (-6, -4);
			\draw (-3, -6) -- (-2, -4);
			\draw (-1, -6) -- (-2, -4);
			\draw (1, -6) -- (2, -4);
			\draw (3, -6) -- (2, -4);
			\draw (5, -6) -- (6, -4);
			\draw (7, -6) -- (6, -4);
			
			\filldraw (-7, -6) circle (2pt) node{};
			\filldraw (-5, -6) circle (2pt) node{};
			\filldraw (-3, -6) circle (2pt) node{};
			\filldraw (-1, -6) circle (2pt) node{};
			\filldraw (1, -6) circle (2pt) node{};
			\filldraw (3, -6) circle (2pt) node{};
			\filldraw (5, -6) circle (2pt) node{};
			\filldraw (7, -6) circle (2pt) node{};
			\draw (-7, -6) -- (-7.5, -8);
			\draw (-7, -6) -- (-6.5, -8);
			\draw (-5, -6) -- (-5.5, -8);
			\draw (-5, -6) -- (-4.5, -8);
			\draw (-3, -6) -- (-3.5, -8);
			\draw (-3, -6) -- (-2.5, -8);
			\draw (-1, -6) -- (-1.5, -8);
			\draw (-1, -6) -- (-0.5, -8);
			\draw (1, -6) -- (0.5, -8);
			\draw (1, -6) -- (1.5, -8);
			\draw (3, -6) -- (2.5, -8);
			\draw (3, -6) -- (3.5, -8);
			\draw (5, -6) -- (4.5, -8);
			\draw (5, -6) -- (5.5, -8);
			\draw (7, -6) -- (6.5, -8);
			\draw (7, -6) -- (7.5, -8);

			\filldraw (-7.5, -8) circle (2pt) node[below = 0.2cm] {};
			\filldraw (-6.5, -8) circle (2pt) node[below = 0.2cm] {};
			\filldraw (-5.5, -8) circle (2pt) node[below = 0.2cm] {};
			\filldraw (-4.5, -8) circle (2pt) node[below = 0.2cm] {};
			\filldraw (-3.5, -8) circle (2pt) node[below = 0.2cm] {};
			\filldraw (-2.5, -8) circle (2pt) node[below = 0.2cm] {};
			\filldraw (-1.5, -8) circle (2pt) node[below = 0.2cm] {};
			\filldraw (-0.5, -8) circle (2pt) node[below = 0.2cm] {};
			\filldraw (0.5, -8) circle (2pt) node[below = 0.2cm] {};
			\filldraw (1.5, -8) circle (2pt) node[below = 0.2cm] {};
			\filldraw (2.5, -8) circle (2pt) node[below = 0.2cm] {};
			\filldraw (3.5, -8) circle (2pt) node[below = 0.2cm] {};
			\filldraw (4.5, -8) circle (2pt) node[below = 0.2cm] {};			
			\filldraw (5.5, -8) circle (2pt) node[below = 0.2cm] {};
			\filldraw (6.5, -8) circle (2pt) node[below = 0.2cm] {};
			\filldraw (7.5, -8) circle (2pt) node[below = 0.2cm] {};
		\end{tikzpicture}
		\captionsetup{justification=centering}
		\caption{The tree $T_2$}
		\label{middlepoint}	
	\end{subfigure}
	\begin{subfigure}[t]{0.49\linewidth}
		\centering
		\begin{tikzpicture}[scale=0.49, transform shape]
			
			\filldraw (0, 0) circle (2pt) node{};
			\draw (0, 0) -- (-4, -2);
			\draw (-4, -2) -- (4, -2);
			
			\filldraw (-4, -2) circle (2pt) node{};
			\filldraw (4, -2) circle (2pt) node{};
			\draw (-4, -2) -- (-6, -4);
			\draw (-6, -4) -- (-2, -4);
			\draw (4, -2) -- (2, -4);
			\draw (2, -4) -- (6, -4);
			
			\filldraw (-6, -4) circle (2pt) node{};
			\filldraw (-2, -4) circle (2pt) node{};
			\filldraw (2, -4) circle (2pt) node{};
			\filldraw (6, -4) circle (2pt) node{};
			\draw (-7, -6) -- (-6, -4);
			\draw (-5, -6) -- (-7, -6);
			\draw (-3, -6) -- (-2, -4);
			\draw (-1, -6) -- (-3, -6);
			\draw (1, -6) -- (2, -4);
			\draw (3, -6) -- (1, -6);
			\draw (5, -6) -- (6, -4);
			\draw (7, -6) -- (5, -6);
			
			\filldraw (-7, -6) circle (2pt) node{};
			\filldraw (-5, -6) circle (2pt) node{};
			\filldraw (-3, -6) circle (2pt) node{};
			\filldraw (-1, -6) circle (2pt) node{};
			\filldraw (1, -6) circle (2pt) node{};
			\filldraw (3, -6) circle (2pt) node{};
			\filldraw (5, -6) circle (2pt) node{};
			\filldraw (7, -6) circle (2pt) node{};
			\draw (-7, -6) -- (-7.5, -8);
			\draw (-7.5, -8) -- (-6.5, -8);
			\draw (-5, -6) -- (-5.5, -8);
			\draw (-5.5, -8) -- (-4.5, -8);
			\draw (-3, -6) -- (-3.5, -8);
			\draw (-3.5, -8) -- (-2.5, -8);
			\draw (-1, -6) -- (-1.5, -8);
			\draw (-1.5, -8) -- (-0.5, -8);
			\draw (1, -6) -- (0.5, -8);
			\draw (0.5, -8) -- (1.5, -8);
			\draw (3, -6) -- (2.5, -8);
			\draw (2.5, -8) -- (3.5, -8);
			\draw (5, -6) -- (4.5, -8);
			\draw (4.5, -8) -- (5.5, -8);
			\draw (7, -6) -- (6.5, -8);
			\draw (6.5, -8) -- (7.5, -8);

			\filldraw (-7.5, -8) circle (2pt) node[below = 0.2cm] {};
			\filldraw (-6.5, -8) circle (2pt) node[below = 0.2cm] {};
			\filldraw (-5.5, -8) circle (2pt) node[below = 0.2cm] {};
			\filldraw (-4.5, -8) circle (2pt) node[below = 0.2cm] {};
			\filldraw (-3.5, -8) circle (2pt) node[below = 0.2cm] {};
			\filldraw (-2.5, -8) circle (2pt) node[below = 0.2cm] {};
			\filldraw (-1.5, -8) circle (2pt) node[below = 0.2cm] {};
			\filldraw (-0.5, -8) circle (2pt) node[below = 0.2cm] {};
			\filldraw (0.5, -8) circle (2pt) node[below = 0.2cm] {};
			\filldraw (1.5, -8) circle (2pt) node[below = 0.2cm] {};
			\filldraw (2.5, -8) circle (2pt) node[below = 0.2cm] {};
			\filldraw (3.5, -8) circle (2pt) node[below = 0.2cm] {};
			\filldraw (4.5, -8) circle (2pt) node[below = 0.2cm] {};			
			\filldraw (5.5, -8) circle (2pt) node[below = 0.2cm] {};
			\filldraw (6.5, -8) circle (2pt) node[below = 0.2cm] {};
			\filldraw (7.5, -8) circle (2pt) node[below = 0.2cm] {};
			
		\end{tikzpicture}
		\captionsetup{justification=centering}
		\caption{The tilt of $T_2$}
		\label{middlepoint2}	
	\end{subfigure}		
	
	\caption{An illustration of two trees and their tilts}
	\label{fig:tilt}
	
\end{figure}

In our proof we will use so-called smooth tree decompositions.
A \emph{smooth} tree decomposition    of width~$k$ is a tree decomposition in which
all bags have $k+1$ vertices and adjacent bags share exactly~$k$ vertices.
It is known that every $n$-vertex graph of treewidth $k$ has a smooth tree decomposition of width~$k$; and any smooth tree decomposition has exactly $n-k$ nodes~\cite{B96}.

We will also need to define a \emph{tilt} of a  tree, and for this we briefly discuss the conventions we use for depth-first searches.
For our purposes, the standard depth-first search tree traversal algorithm produces two outputs: a {\em DFS preordering} -- the linear ordering of the vertices which records the order in which they were first visited by the algorithm; and a {\em DFS walk} -- the sequence of vertices visited by the algorithm, with repetitions, that starts and ends in the root. By convention, we will assume that DFS searches always start at the root vertex.

Let $T$ be an $n$-vertex tree rooted at $v_1$, and let $\sigma = (v_1, \dots, v_n)$ 
be a DFS preordering of $V(T)$. 
The {\em tilt of $T$ about $\sigma$} is the tree rooted at $v_1$ that is obtained from $T$ as follows.
For every non-root vertex $x$, if $x$ is not the leftmost (with respect to $\sigma$) child of its parent $y$, then
we remove the edge $\{y,x\}$ and add the edge $\{x',x\}$, where $x'$ is the child of $y$ that 
immediately precedes $x$. For an illustration, see Figure~\ref{fig:tilt}, where we assume that the root vertex is at the top and the DFS algorithm visits children from left to right.

Tilts have some properties that will be useful to us. Let $T'$ be the tilt of $T$ about some DFS preordering $\sigma$.
Then every vertex of $T'$ has degree at most 3. Moreover, a vertex together with its children in $T$ induce a path in $T'$. Finally, we note that $T'$ admits a DFS traversal with the DFS preordering $\sigma$. 

\twpw*
\begin{proof}
	We first relabel the vertices of $G_1$. Let $(\cT,(X_y)_{y\in V(\cT)})$ be a smooth tree decomposition of $G_1$ of width $k$.
	We set an arbitrary node $r$ of $\cT$ to be its root, and
	fix a DFS preordering $\sigma = (a_1=r, a_2, \ldots, a_{n-k})$ of $V(\cT)$.
	Next, we assign unique labels from $[k+1]$ to the $k+1$ vertices in bag $X_{a_1}$ in an arbitrary
	way, and assign label $k+i$ to the  unique vertex in $X_{a_i} \setminus X_{p_i}$ for every $2 \leq i \leq n-k$,
	where $p_i$ is the parent of $a_i$ in $\cT$.
	
	We now relabel the vertices of $G_2$. For this we fix an ordering $\pi$ of $V(G_2)$ that witnesses
	the graph's separation number and assign increasing labels from $[n]$ according to $\pi$.
	In other words, for every $u,v \in V(G_2)$ we have $u < v$ if and only if $\pi(u) < \pi(v)$.
	Let $H$ be the union of $G_1$ and $G_2$ along the identity permutation. In the rest of the proof
	we construct a tree decomposition of $H$ of width at most $k+3\ell+1$.
	
	For every vertex $v \in V(G_2)$, we let 
	$$
	S(v)=\{w\in V(G_2): \exists x\in N_{G_2}(w) \mbox{ such that } \pi(w) < \pi(v)\le \pi(x) \}.
	$$ 
	By definition, we have that $\max_{v\in V(G_2)}|S(v)|$ is the separation number of $G_2$, and thus, by Theorem~\ref{thm:pathwidth-vs-number}, we have $|S(v)|\le \ell$ for every $v\in V(G_2)$.
	We observe that the definition of $S(v)$ implies the following

	\medskip
	\noindent
	\textbf{Claim 1.} \emph{$S(i+1) \subseteq \{i\} \cup S(i)$ for every $1 \leq i \leq n-1$}.	
	\medskip

	Let $(\cT^*, (Z_y)_{y\in V(\cT^*)})$ be such that the tree $\cT^*$ is the tilt of $\cT$ about $\sigma$,
	and $Z_{a_1} = X_{a_1}$ and $Z_{a_i} = X_{a_i} \cup X_{p_i}$ for every $2 \leq i \leq n-k$.

	\medskip
	\noindent
	\textbf{Claim 2.} \emph{$(\cT^*, (Z_y)_{y\in V(\cT^*)})$ is a tree decomposition of $G_1$ of width at most $k+1$.}

	\smallskip
	\noindent
	\textit{Proof.}  
	First, it is not hard to verify that if in a tree decomposition we add to a bag all vertices
	of its parent bag, then all three properties of tree decompositions are preserved.
	Therefore, $(\cT, (Z_y)_{y\in V(\cT)})$ is a tree decomposition of $G_1$.
	
	Next, we argue that replacing $\cT$ with $\cT^*$ in the tree decomposition $(\cT, (Z_y)_{y\in V(\cT)})$
	also preserves the properties. Clearly, properties (I) and (II) are preserved as we do not change
	the bags, so we only need to show that for every $v \in V(G_1)$ the subgraph $\cT^*_v$
	is connected. Since $\cT_v$ is connected and $V(\cT_v) = V(\cT^*_v)$, 
	it is enough to show that for every 
	every $\{a_i,a_j\} \in E(\cT_v)$ there is a path from $a_i$ to $a_j$ in $\cT_v^*$.
	The latter means that $v$ belongs to $Z_{a_r}$ for every $a_r$ on the path from $a_i$ to $a_j$ in $\cT^*$.
	Without loss of generality, assume that $a_i$ is the parent of $a_j$ in $\cT$, and
	let $a_{s_1}, a_{s_2}, \ldots, a_{s_t}$ be the children of $a_i$ in $\cT$ that precede $a_j$
	in $\sigma$. Then $a_i, a_{s_1}, a_{s_2}, \ldots, a_{s_t}, a_j$ is the path from $a_i$ to $a_j$
	in $\cT^*$. Now, since $v \in Z_{a_i} \cap Z_{a_j} = X_{a_i}$ and $X_{a_i}$ is a subset of all $Z_{a_{s_1}}, Z_{a_{s_2}}, \ldots, Z_{a_{s_t}}$, we conclude that $v$ belongs to all these bags, as required.
	
	To finish the proof of the claim, we recall that $(\cT,(X_y)_{y\in V(\cT)})$ is a smooth tree decomposition. 
	Hence $|X_{p_i} \setminus X_{a_i}| = 1$ for every $2 \leq i \leq n-k$, and therefore $|Z_{a_i}| \leq k+2$ for every $i \in [n-k]$, i.e. the width of $(\cT^*, (Z_y)_{y\in V(\cT^*)})$ is at most $k+1$. \qed

	\medskip
	
	Next, we will iteratively extend the bags of $(\cT^*, (Z_y)_{y\in V(\cT^*)})$ in such a way
	that it satisfies properties (I) and (III) of tree decompositions after every iteration, and it also 
	satisfies property (II) with respect to graph $H$ after the final iteration.
	Note that because the vertex set of $G_1$ and $H$ is the same and we will only extend the bags, property (I) will always hold.
 	
 	If $k+1 < \ell$ we set $r = \ell - k$, otherwise we set $r = 1$.
 	Note that $\cup_{i=1}^r Z_{a_i} = \{1, 2, \ldots, k+r\}$. 
 	In the first iteration, for every $i \in [r]$ we extend $Z_{a_i}$ to be equal $\{1, 2, \ldots, k+r \}$.
 	Since $\sigma$ is a DFS preordering for $\cT^*$, the subgraph $\cT^*[\{a_1, \ldots, a_r \}]$
 	is connected and therefore this extension preserves property (III).
 	Observe that for every $i \in [r]$, the bag $Z_{a_i}$ contains $k+i$ and $S(k+i) \subseteq Z_{a_i}$.
 	In particular, together with Claim 1 this implies that  $S(k+r+1) \subseteq \{k+r\} \cup S(k+r) \subseteq Z_{a_r}$.
 	
 	Now, for every $r+1 \leq i \leq n-k$ we perform the following iteration: we add $S(k+i)$ to every bag
 	$Z_{a_t}$, where $a_t$ is a vertex of the path from $a_{i-1}$ to $a_{i}$ in $\cT^*$.
 	We will prove by induction on $i$ that after the iteration corresponding to $i$ we have $S(k+i+1) \subseteq Z_{a_i}$,
 	and for every $v \in S(k+i)$ the subgraph $\cT^*_v$ is connected, i.e. property (III) is preserved.
 	Indeed, since after the iteration corresponding to $i$ the set $S(k+i)$ is a subset of $Z_{a_i}$ and $k+i \in Z_{a_i}$ by the initial definition of the bags,
 	we conclude from Claim 1 that $S(k+i+1) \subseteq Z_{a_i}$. To show the second part,
 	we observe that before the iteration, by the induction hypothesis, $S(k+i) \subseteq Z_{a_{i-1}}$ and for every
 	$v \in S(k+i)$ the subgraph $\cT^*_v$ is connected. Since the extension added $v$ only to the bags 
 	of the path from $a_{i-1}$ to $a_{i}$, the subgraph $\cT^*_v$ remains connected after the iteration.
 	Consequently, after all the iterations we have that $\cT^*$ satisfies properties (I) and (III) of tree decompositions
 	and also for every $i \in [n-k]$ we have that $\{k+i\} \cup S(k+i) \subseteq Z_{a_i}$.

	We will show next that the latter fact implies property (II) for $H$. From Claim 2 we already know that
	every edge of $G_1$ belongs to some bag of $\cT^*$, so it remains to show the same for every edge of $G_2$.
	Let $\{i,j\} \in E(G_2)$, where $i < j$, be an arbitrary edge of $G_2$. 
	If $j \leq k+1$, then $\{i,j\} \subseteq Z_{a_r}$. If $j > k+1$, then $\{i,j\} \subseteq Z_{j-k}$
	as $i \in S(j)$ and $\{j\} \cup S(j) \subseteq Z_{a_{j-k}}$.
  	
 	To finish the proof we observe that we can think that the bag updates (including the updates of the first $r$ bags) are done when we move from the current vertex $a_{i-1}$ of $\cT^*$ to the next unvisited vertex $a_i$ along a DFS walk. 
 	In this way, since the maximum degree of $\cT^*$ is at most 3, every vertex is visited by 
 	the DFS walk at most 3 times. Each time the corresponding bag is extended by a set of
 	size at most $\ell$. Therefore the width of the final tree decomposition is at most $k + 1 + 3 \ell$.
\end{proof}

\subsection{Tighter bounds on the treewidth of unions}

Nash-Williams' tree-covering theorem~\cite{NW64} states that a graph $G=(V,E)$ can be edge-covered by at most $t$ trees if  and only if for every non-empty set $U\subseteq V$ the number of edges in the subgraph of $G$ induced by $U$ is at most $t(|U|-1)$. 
It is not hard to see that this classical result implies that every partial 2-tree can be edge-covered by at most two trees. 
In this section we prove results that, as particular cases, give types of pairs of trees that can be glued into partial 2-trees.

We first observe that the union $H$ of a star $K_{1,n-1}$ and an $n$-vertex tree $T$ along
an arbitrary permutation $\varphi$ has treewidth at most 2.
Indeed, in $H$ all the edges that are coming from the star are incident with the
center of the star. Hence, if $\cT$ is an optimal tree decomposition of $T$ and the center of
the star is identified with vertex $i$ of $T$, then by adding $i$ to every bag of $\cT$ we obtain
a tree decomposition of $H$ of width at most 2. This argument immediately generalizes
to graphs of bounded vertex cover number and graphs of bounded treewidth.

\twvc*
\begin{proof}
	Let $G_1$ be a graph with a vertex cover $C \subseteq V(G_1)$ of size $k$,
	and let $G_2$ be a graph with a tree decomposition $\cT$ of width $t$.
	Let also $\varphi \in S_n$ be an arbitrary permutation and $H$ be the union of $G_1$ and $G_2$
	along $\varphi$.
	Then it is routine to check that by adding $\varphi(C)$ to every bag of $\mathcal{T}$ we obtain a tree decomposition of $H$ of width at most $k+t$. 
\end{proof}

The fact that the union of a star and a tree along any permutation is a partial 2-tree
distinctly sets apart our questions from the classical graph packing problems, where
unions are required to be edge-disjoint (see e.g.~\cite{HHS81, Getal17}). 
Indeed, while stars cannot be packed with any other tree, they are among the easiest trees to glue with.

We  now prove that two graphs of bounded pathwidth can always be glued into
a graph of bounded pathwidth.

\pathwidth*
\begin{proof}
	We apply Theorem~\ref{thm:pathwidth-vs-number}.	Let $\pi_{G_1}$ and $\pi_{G_2}$ be layouts of $G_1$ and $G_2$ respectively, that witness
	their separation numbers. Without loss of generality, assume that the vertices of 
	$G_1$ are labeled according to the layout $\pi_{G_1}$, and the vertices of $G_2$ are labeled according to the layout $\pi_{G_2}$. 
	Then the union of $G_1$ and $G_2$ along the identity permutation is a graph of separation number at most $k+t$, witnessed by layout $\pi_{G_1}$.
\end{proof}

Caterpillars have pathwidth 1, so the above lemma implies that two caterpillars can always be glued into a graph of pathwidth (and hence treewidth) at most 2. Note, that in contrast to stars, taking a right permutation is crucial to preserve
bounded treewidth. Indeed, two long paths can be glued together to a large square grid, 
which is known to have a large treewidth.

It is also possible to glue a path $P$ with an arbitrary tree $T$ to a graph of treewidth 
at most 2.
Informally, this can be seen as follows. Consider a planar embedding of $T$.
Then unite the vertices of $P$ with the vertices of $T$ following a depth-first search (DFS) ordering for $T$
and embed the edges of $P$ in such a way that the resulting embedding is outerplanar,
i.e. all vertices belong to the outer face.
This shows that $P$ and $T$ can be glued into an outerplanar graph, and it is known that outerplanar graphs have treewidth at most 2.
This type of argument was used in previous work on gluing paths to graphs of bounded treewidth~\cite{Q17}, and the proof of our main positive result, Theorem~\ref{th:tw-pw}, is a generalization of the argument.

The same result can also be shown using the following book embedding argument, which we use again in Section~\ref{sec:conclusion}.
A \emph{book} is a collection of half-planes, called the \emph{pages}, 
all having the same line as their boundary, which is called the \emph{spine}. 
A \emph{book embedding} of graph is a generalization of a planar embedding in which 
the vertices of the graph are mapped to the spine and the edges are embedded 
in the pages without crossings. 
The \emph{book thickness} of a graph is the smallest possible number of pages in a book embedding of the graph.
Graphs of book thickness 1 are exactly outerplanar graphs. 
In particular, any tree has book thickness 1.
To see that a path $P$ and an arbitrary tree $T$ can be glued into a graph of treewidth at
most 2 we will show that there is a gluing of book thickness 1.
Let us fix a book embedding of $T$ into a single-page book. This embedding induces a linear
order of the vertices as they appear in the spine. 
We unite $T$ with $P$ in such a way that the edges of $P$ connect consecutive vertices along the spine and therefore can be embedded in the page very close to the spine without causing
any edge intersections. Consequently, the union has book thickness 1. Therefore it is  outerplanar and hence its treewidth is at most 2.


\section{Conclusion and outlook}\label{sec:conclusion}

Our main result shows that graphs of bounded treewidth cannot always be glued into a graph of bounded treewidth, and that this is true even for graphs of treewidth 1. Yet we also showed that certain graphs of bounded treewidth
can be glued into a graph of bounded treewidth.
In particular, we observed that two caterpillars and also a path and a tree can be glued into a graph of treewidth at most 2.
We do not know if it is always possible to achieve the same for arbitrary tree and caterpillar.

\begin{question}
	Is it always possible to glue a tree and a caterpillar to a graph of treewidth at most 2?
\end{question}

\noindent
By Theorem \ref{th:tw-pw} we know that any caterpillar and any tree can be glued into a graph of treewidth at most 5. 


A tempting direction for further investigation is the gluing of three or more graphs. For the case of trees we ask two specific questions which could guide this endeavor. 


It is well-known (see, e.g., Exercise 4 of \cite[Chapter 4]{Diestel}) that since every $n$-vertex planar graph has at most $3n-6$ edges, then Nash-Williams' tree-covering theorem implies that every planar graph can be edge-covered by three trees.

\begin{question}
	Given any three $n$-vertex trees is it always possible to glue them into a planar graph? 
\end{question}

\noindent
Since every graph of book thickness 2 is planar, one  can use the book embedding argument 
from Section \ref{sec:positive} to show that two arbitrary trees and a path can be glued into a planar graph.
However, for three arbitrary trees we do not even know if they can always be
glued into a graph that excludes some fixed clique as a minor.

Gon{\c{c}}alves proved that every planar graph can be decomposed into 4 forests of caterpillars~\cite{Gon07}. 

\begin{question}
	Is it always possible to glue arbitrary 4 caterpillars into a planar graph? 
\end{question}	

\noindent
Is it possible to do this for any 3 caterpillars?
From Lemma \ref{lem:pathwidth} we know that any $k$ caterpillars can be glued into a graph of pathwidth at most $k$, and therefore into a graph that excludes some fixed clique minor.

\section*{Acknowledgements}

The third author thankfully acknowledges support from FONDECYT/ANID Iniciaci\'on en Investigaci\'on Grant 11201251, and from Programa Regional MATH-AMSUD MATH210008.



\bibliographystyle{plain}
\bibliography{references.bib}

\end{document}